\newtheorem{thm}{Theorem}
\newtheorem{defn}{Definition}
\newtheorem{lemma}{Lemma}
\newtheorem{pro}{Proposition}
\newtheorem{rk}{Remark}
\newtheorem{cor}{Corollary}
\numberwithin{equation}{section} \setcounter{tocdepth}{1}
\newcommand{\M}{{\mathcal M}}
\newcommand{\A}{{\mathcal A}}
\newcommand{\bea}{\begin{eqnarray}}
\newcommand{\eea}{\end{eqnarray}}
\newcommand{\R}{\mathbb{R}}
\def\M{\mathcal M}
\def\R{\mathbb{R}}
\begin{document}
\title [On real chains of evolution algebras]
{On real chains of evolution algebras}

\author { B.A. Omirov, U.A. Rozikov, K. M. Tulenbayev}

 \address{B.\ A.\ Omirov and U.\ A.\ Rozikov\\ Institute of Mathematics, National University of Uzbekistan, Tashkent, Uzbekistan.}
 \email {omirovb@mail.ru \ \ rozikovu@yandex.ru}
\address{K.\ M.\ Tulenbayev\\ Suleyman Demirel University, Kazakhstan.}
 \email {tulen75@hotmail.com}

\begin{abstract} In this paper we define a chain of $n$-dimensional evolution algebras
corresponding to a permutation of $n$ numbers. We show that a chain of evolution algebras (CEA) corresponding to a permutation is trivial (consisting only algebras with zero-multiplication) iff the permutation has not a fixed point. We show that a CEA is a chain of nilpotent algebras (independently on time) iff it is trivial. We construct a wide class of chains of 3-dimensional EAs and a class of symmetric $n$-dimensional CEAs.
A construction of arbitrary dimensional CEAs is given. Moreover, for a chain of 3-dimensional EAs we study the behavior of the baric
property, the behavior of the set of absolute nilpotent elements and dynamics of the set of idempotent elements depending on the time.

{\it AMS classifications (2010):} 17D92; 17D99; 60J27\\[2mm]

{\it Keywords:} Evolution algebra; time; Chapman-Kolmogorov
equation; nilpotentcy.
\end{abstract}
\maketitle
\section{Introduction} \label{sec:intro}

Evolutionary theory is important to a proper understanding of living populations
at all levels. The paper \cite{K} gives a comprehensive survey of problems in evolutionary theory studied by mathematical methods, both with regard to past developments, the present state of the subject and probable future trends. Many of the models considered are actually within the scope of mathematical population genetics, although the general ideas discussed have implications for a much wider area.

The formal language of abstract algebra to study of genetics was introduced in papers \cite{e1,e2,e3}. In recent years many
authors have tried to investigate the difficult problem of
classification of these algebras. The most comprehensive references
for the mathematical research done in this area are \cite{clor,ly,m,t,w}.

 In \cite{ly} an evolution algebra $\mathcal A$ associated with
free population is introduced and using this non-associative algebra
many results are obtained in explicit form, e.g. the explicit
description of stationary quadratic operators, and the explicit
solutions of a nonlinear evolutionary equation in the absence of
selection, as well as general theorems on convergence to equilibrium
in the presence of selection.

In \cite{t} a new type of evolution algebra is introduced. This
evolution algebra is defined as follows.
 Let $(E,\cdot)$ be an algebra over a field $K$. If it admits a
basis $\{e_1,e_2,\dots\}$ such that $e_i\cdot e_j=0$, if $i\ne j$ and $
e_i\cdot e_i=\sum_{k}a_{ik}e_k$ for any $i$, then this algebra is
called an {\it evolution algebra}.

In this paper by the term {\it evolution algebra} we will understand
a finite dimensional evolution algebra $E$ (as mentioned above) over the
field $\R$.

In the book \cite{t}, the foundation of evolution algebra theory and
applications in non-Mendelian genetics and Markov chains are
developed, with pointers to some further research topics.

In \cite{rt} the algebraic structures of function spaces defined by
graphs and state spaces equipped with Gibbs measures by associating
evolution algebras are studied. Results of \cite{rt} also allow  a
natural introduction of thermodynamics in studying of several
systems of biology, physics and mathematics by theory of evolution
algebras.x

Recently in \cite{CLR} a notion of a chain of evolution algebras (CEA) is introduced.
This chain is a dynamical system the state of which at each given time is an evolution algebra.
The sequence of matrices of the structural constants for this
chain of evolution algebras satisfies the Chapman-Kolmogorov equation. It is known that if a matrix satisfying
the Chapman-Kolmogorov equation is stochastic, then it generates a Markov process. In \cite{ht} the authors investigated the time evolution of stochastic non-Markov processes as they occur in the coarse-grained description of open and closed systems.  Some aspects of the theory are illustrated for the two-state process and the Gauss process. The CEA also can be motivated in a such way.

Following \cite{CLR} we recall the definitions. Consider a family $\left\{E^{[s,t]}:\ s,t \in \R,\ 0\leq s\leq t
\right\}$ of $n$-dimensional evolution algebras over the field $\R$
with basis $\{e_1,\dots,e_n\}$ and multiplication table
\begin{equation}\label{1}
 e_ie_i = \sum_{j=1}^na_{ij}^{[s,t]}e_j, \ \ i=1,\dots,n; \ \
e_ie_j =0,\ \ i\ne j,\end{equation} where parameters $s,t$ are
considered as time.

Denote by
$\M^{[s,t]}=\left(a_{ij}^{[s,t]}\right)_{i,j=1,\dots,n}$-the matrix
of structural constants of $E^{[s,t]}$.

\begin{defn}\label{d1} A family $\left\{E^{[s,t]}:\ s,t \in \R,\ 0\leq s\leq t
\right\}$ of $n$-dimensional evolution algebras over the field $\R$
is called a chain of evolution algebras if the matrix
$\M^{[s,t]}$ of its structural constants satisfies the
Chapman-Kolmogorov equation:
\begin{equation}\label{k2}
\M^{[s,t]}=\M^{[s,\tau]}\M^{[\tau,t]}, \ \ \mbox{for any} \ \
s<\tau<t.
\end{equation}
\end{defn}

\begin{defn}\label{d2} A CEA is called a time-homogenous CEA if
the matrix $\M^{[s,t]}$ depends only on $t-s$. In this case we write
$\M^{[t-s]}$.
\end{defn}

In \cite{RM} 25 distinct examples of chains of two-dimensional evolution algebras are constructed.
 For all of chains constructed in \cite{CLR} and \cite{RM} the behavior of the baric
property, the behavior of the set of absolute nilpotent elements and dynamics of the set of idempotent elements depending on the time are studied.

The paper is organized as follows. In Section 2 we define a chain of $n$-dimensional evolution algebras
corresponding to a permutation of $n$ numbers. We show that a CEA corresponding to a permutation is trivial (consisting only algebras with zero-products) iff the permutation has not a fixed point. In Section 3 we show that a CEA is a chain of nilpotent algebras (independently on time) iff it is trivial. Section 4 is devoted to a construction of chains of 3-dimensional EAs. In Section 5 for a chain of 3-dimensional EAs we study the behavior of the baric
property, the behavior of the set of absolute nilpotent elements and dynamics of the set of idempotent elements depending on the time. Section 6 is devoted to construction of an arbitrary dimensional symmetric CEAs. In the last section we give an argument to construct a wide class of chains of arbitrary finite dimensional EAs.

 \section{The CEA corresponding to a permutation}

Let $S_n$ be the group of permutations of the set  $\{1,2,\dots,n\}$.
For a quadratic matrix $A=\{a_{ij}\}_{i,j=1}^n$ we say that it corresponds
to a permutation $\pi$ if
\begin{equation}\label{pi}
a_{ij}=\left\{\begin{array}{ll}
0, \ \ \mbox{if} \ \ j\ne \pi(i)\\[2mm]
\ne 0,  \ \ \mbox{if} \ \ j=\pi(i).
\end{array}\right.
\end{equation}

Let $A_\pi$, $B_\pi$ be two matrices corresponding to a permutation $\pi$. It is easy to see that
\begin{equation}\label{p2}
A_\pi B_\pi=C_{\pi^2}.
\end{equation}

For a fixed $\pi\in S_n$ we shall construct CEAs corresponding
to $\M^{[s,t]}_\pi$.

\begin{thm}\label{t1} Let $\pi\in S_n$ and $\left\{\M^{[s,t]}_\pi, \ \ 0\leq s\leq t\right\}$ be a family of
matrices which satisfies the equation (\ref{k2}). Then
\begin{itemize}

\item[(i)] $\M^{[s,t]}_\pi\equiv 0$
if $\{i: \pi(i)=i\}= \emptyset$.

\item[(ii)] $\M^{[s,t]}_\pi\ne 0$ if $\{i: \pi(i)=i\}\ne \emptyset$.
\end{itemize}
\end{thm}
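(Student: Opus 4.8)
The plan is to exploit the identity \eqref{p2}: a matrix $\M_\pi$ corresponding to $\pi$ has the property that a product of two such matrices corresponds to $\pi^2$. So if $\M^{[s,t]}_\pi$ corresponds to $\pi$ for every pair $s\le t$ (in the sense of \eqref{pi}, i.e. the zero pattern is exactly the graph of $\pi$, with the off-pattern entries allowed to be nonzero), then applying \eqref{k2} with any intermediate $\tau$ forces $\M^{[s,t]}_\pi$ to \emph{simultaneously} correspond to $\pi$ and to $\pi^2$. The point is that the support of a matrix corresponding to $\pi$ is the set $\{(i,\pi(i))\}$, and the support of one corresponding to $\pi^2$ is $\{(i,\pi^2(i))\}$; these can coincide only on the indices $i$ with $\pi(i)=\pi^2(i)$, i.e. $\pi(i)=i$. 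For the remaining indices the $(i,\pi(i))$ entry is simultaneously forced to be nonzero (pattern of $\pi$) and zero (not in the pattern of $\pi^2$), a contradiction unless we drop the ``$\ne 0$'' requirement there. This is exactly the dichotomy in (i) and (ii).

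First I would make precise what ``$\M^{[s,t]}_\pi$ satisfies \eqref{k2}'' means here: the family consists of matrices whose \emph{zero entries} include all positions $(i,j)$ with $j\ne\pi(i)$ — the natural reading compatible with \eqref{1}, where an evolution algebra in the chain is allowed to degenerate. Then for part (i), suppose $\pi$ has no fixed point. Fix $s<t$ and pick any $\tau\in(s,t)$. By \eqref{p2} the product $\M^{[s,\tau]}_\pi\M^{[\tau,t]}_\pi$ is a matrix supported on $\{(i,\pi^2(i)):i\}$. But $\M^{[s,t]}_\pi$ is supported on $\{(i,\pi(i)):i\}$. Since $\pi$ has no fixed point, $\pi^2(i)\ne\pi(i)$ is \emph{not} guaranteed in general (e.g. a transposition has $\pi^2=\mathrm{id}$), so I need to be a little careful: what is guaranteed is that $\pi(i)\ne i$ for all $i$, hence the diagonal of a $\pi$-matrix is zero, while $\pi^2$ may or may not have fixed points. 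The cleaner route is: from \eqref{k2}, $\M^{[s,t]}_\pi$ equals a product of two $\pi$-matrices for \emph{every} choice of $\tau$, and by induction $\M^{[s,t]}_\pi$ is also a product of $k$ $\pi$-matrices for every $k$ (subdivide $[s,t]$ into $k$ pieces), hence corresponds to $\pi^k$ for every $k\ge1$ while also corresponding to $\pi$. Choosing $k$ to be (a multiple of the order of $\pi$) plus $1$ is not quite what I want either; instead choose $k=2$ and $k=3$: $\M^{[s,t]}_\pi$ corresponds to $\pi^2$ and to $\pi^3$, so for every $i$ the unique possibly-nonzero entry in row $i$ sits in column $\pi^2(i)$ and in column $\pi^3(i)$, forcing $\pi^2(i)=\pi^3(i)$, i.e. $\pi(i)=i$. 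Thus if $\pi$ has no fixed point, \emph{every} entry of $\M^{[s,t]}_\pi$ must be zero, giving $\M^{[s,t]}_\pi\equiv0$.

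For part (ii), suppose $\pi$ has a fixed point $i_0$, so $\pi(i_0)=i_0$. Here I must actually \emph{exhibit} a nonzero solution of \eqref{k2} with the prescribed zero pattern — it is not automatic, because \eqref{k2} is a genuine functional equation. The natural candidate: put $a^{[s,t]}_{i_0 i_0}=f(t-s)$ for a suitable scalar function $f$ and all other entries $0$. Then \eqref{k2} reduces to the single scalar Chapman–Kolmogorov relation $f(t-s)=f(\tau-s)f(t-\tau)$, which is solved by $f\equiv1$, or more generally $f(u)=c^{u}$ for any constant $c>0$ (or $f\equiv 0$, but we want nonzero). This matrix has its only nonzero entry at $(i_0,i_0)$, its zero pattern refines the pattern of $\pi$ (every off-diagonal-of-$i_0$ position is zero, which is allowed), it satisfies \eqref{k2}, and it is $\not\equiv0$. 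Hence (ii).

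The main obstacle is the direction (i): getting from ``$\M^{[s,t]}_\pi$ is a product of $\pi$-matrices'' to ``$\M^{[s,t]}_\pi\equiv0$'' requires the observation that one can subdivide the interval arbitrarily finely, so the same matrix corresponds to $\pi^k$ for all $k$, and then comparing $k=2$ with $k=3$ (hence any two consecutive powers) pins down $\pi(i)=i$ on the support. Handling the possibility that some row of $\M^{[s,\tau]}_\pi$ or $\M^{[\tau,t]}_\pi$ is entirely zero (so the ``$\ne0$'' in \eqref{pi} fails for those factors) needs a remark: \eqref{k2} must hold for \emph{all} intermediate $\tau$, and if for some $\tau$ a factor degenerates one still gets that $\M^{[s,t]}_\pi$ is supported inside $\{(i,\pi^2(i))\}$, which is all that is used. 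Everything else is the bookkeeping of matrix supports under multiplication, which \eqref{p2} already records.
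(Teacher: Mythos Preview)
Your proposal is correct and follows the same core idea as the paper: both arguments compare the support pattern $\{(i,\pi(i))\}$ of $\M^{[s,t]}_\pi$ with the pattern $\{(i,\pi^2(i))\}$ forced by one application of \eqref{k2}, concluding that any nonzero row $i$ must satisfy $\pi(i)=\pi^2(i)$, i.e.\ $\pi(i)=i$.

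Your detour through powers $k=2,3$ is unnecessary and comes from a slip: you assert that ``$\pi^2(i)\ne\pi(i)$ is not guaranteed'' for fixed-point-free $\pi$, citing a transposition. But $\pi^2(i)=\pi(i)\Leftrightarrow \pi(i)=i$ (apply $\pi^{-1}$), so a fixed-point-free $\pi$ always has $\pi^2(i)\ne\pi(i)$; for the transposition $(1\,2)$ one gets $\pi^2(1)=1\ne 2=\pi(1)$. The paper therefore compares $k=1$ with $k=2$ directly and finishes (i) in one step, without subdividing the interval further. Your $k=2$ vs.\ $k=3$ argument is still valid, just longer than needed.

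For (ii) the paper observes that at each fixed point $j$ the equation \eqref{k2} collapses to Cantor's second equation $a_{jj}^{[s,\tau]}a_{jj}^{[\tau,t]}=a_{jj}^{[s,t]}$ and records the general solutions $a_{jj}^{[s,t]}=\Phi_j(t)/\Phi_j(s)$ (and a piecewise-constant family). Your exponential $c^{t-s}$ at a single fixed point is a particular instance of this, which suffices for the existence claim.
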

\begin{proof} (i) Let $\M^{[s,\tau]}_\pi\M^{[\tau,t]}_\pi=(c_{ij})_{i,j=1}^n$. We have by (\ref{p2}) that
\begin{equation}\label{pii}
c_{ij}=\left\{\begin{array}{ll}
0, \ \ \mbox{if} \ \ j\ne \pi^2(i)\\[2mm]
\ne 0,  \ \ \mbox{if} \ \ j=\pi^2(i).
\end{array}\right.
\end{equation}
From (\ref{k2}) we get
\begin{equation}\label{cp}\left\{\begin{array}{ll}
0\ne c_{i\pi^2(i)}= a_{i\pi(i)}^{[s,\tau]}a_{\pi(i)\pi(\pi(i))}^{[\tau,t]}=a_{i\pi(i)}^{[s,t]},  \\[2mm]
a_{ij}^{[s,t]}=0,  \ \ \mbox{if} \ \ j\ne \pi(i),  \ \ \forall s,t.
\end{array}\right.
\end{equation}

Hence $a_{i\pi(i)}^{[s,t]}=0$ iff $\pi(i)\ne \pi^2(i)$, i.e., $i\ne \pi(i)$. \\

(ii) Now assume $\{i: \pi(i)=i\}\ne \emptyset$. Without loss of generality we can take $\{i: \pi(i)=i\}=\{1,2,\dots, k\}$.
Then from (\ref{cp}) we obtain
$$a_{jj}^{[s,\tau]}a_{jj}^{[\tau,t]}=a_{jj}^{[s,t]}, \ \ j=1,\dots, k.$$
This equation is known as Cantor's second equation which has
very rich family of solutions:

a) $a_{jj}^{[s,t]}={\Phi_j(t)\over \Phi_j(s)}$, where $\Phi_j$ is an arbitrary
function with $\Phi_j(s)\ne 0$;

b) $$a_{jj}^{[s,t]}=\left\{\begin{array}{ll}
1, \ \ \mbox{if}\ \ s\leq t<a_j,\\[2mm]
0, \ \ \mbox{if} \ \ t\geq a_j,\\
\end{array}\right. \ \ \mbox{where} \ \ a_j>0.$$
The CEAs $E_{\pi}^{[s,t]}$ corresponding to solutions a)-b) have matrix of structural constants in the following form:
$$\M_\pi^{[s,t]}=\left(\begin{array}{cccccc}
a_{11}^{[s,t]} & 0 &0& \dots & \dots &0\\[2mm]
0 & a_{22}^{[s,t]} &0& \dots & \dots &0\\[3mm]
\vdots&\dots& \dots& \dots& &\vdots\\[3mm]
0&0&\dots&a_{kk}^{[s,t]}&\dots&0\\[2mm]
0&0&\dots&0&\dots&0\\[2mm]
\vdots&\dots& \dots& \dots& &\vdots\\[2mm]
0&0&\dots&0&\dots&0
\end{array}\right).$$
This completes the proof.
\end{proof}

\section{Nilpotent CEAs}

\begin{defn} An element $x$ of an algebra $\mathcal A$  is called nil if
there exists $n(a) \in \mathbb{N}$ such that $(\cdots
\underbrace{((x\cdot x)\cdot x)\cdots x}_{n(a)})=0$.
Algebra $\mathcal A$ is called nil if every  element of the algebra
is nil.
\end{defn}

For $k\geq 1$ we introduce the following sequences:
$$\mathcal A^{(1)} =\mathcal A^, \ \ \mathcal A^{(k+1)} = \mathcal A^{(k)}\mathcal A^{(k)}.$$
$$\mathcal A^{<1>} = \mathcal A, \ \ \mathcal A^{<k+1>} = \mathcal A^{<k>}\mathcal A.$$
$$\mathcal A^1 = \mathcal A, \ \ \mathcal A^k =\sum_{i=1}^{k-1}\mathcal A^i\mathcal A^{k-i}.$$

\begin{defn} An algebra $\mathcal A$ is called
\begin{itemize}
\item[(i)] solvable if there exists $n\in \mathbb{N}$ such that $\mathcal A^{(n)} = 0$ and the minimal such number is
called index of solvability;
\item[(ii)] right nilpotent if there exists $n\in \mathbb{N}$ such that $\mathcal A^{<n>} = 0$ and the minimal such
number is called index of right nilpotency;
\item[(iii)] nilpotent if there exists $n\in \mathbb{N}$ such that $\mathcal A^n = 0$ and the minimal such number
is called index of nilpotency.
\end{itemize}
\end{defn}

The following theorem is known
\begin{thm} \label{thm2}\cite{clor}, \cite{Com} The following statements are equivalent for
an $n$-dimensional evolution algebra $E$:

a) The matrix corresponding to $E$ can be written as
\begin{equation}\label{lor5}
\widehat{A}=\left(
  \begin{array}{ccccc}
 0 & a_{12} & a_{13} &\dots &a_{1n} \\[1.5mm]
 0 & 0 & a_{23} &\dots &a_{2n} \\[1.5mm]
0 & 0 & 0 &\dots &a_{3n} \\[1.5mm]
\vdots & \vdots & \vdots &\cdots & \vdots \\[1.5mm]
0 & 0 & 0 &\cdots &0 \\
\end{array}
\right);
\end{equation}

b) $E$ is a right nilpotent algebra;

c) $E$ is a nil algebra.

d) $E$ is a nilpotent algebra.
\end{thm}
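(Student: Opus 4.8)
The plan is to establish the chain of implications (a) $\Rightarrow$ (d) $\Rightarrow$ (c) $\Rightarrow$ (b) $\Rightarrow$ (a), since each single arrow is easier than proving equivalences pairwise, and the first and last are the two substantive ones. I would begin by recording the combinatorial interpretation of the matrix: if $\widehat{A}=(a_{ij})$ is the structural matrix of $E$ in the basis $\{e_1,\dots,e_n\}$, then $e_ie_i=\sum_j a_{ij}e_j$, and multiplying basis vectors one sees that any product of the $e_i$'s lands in the span of basis vectors with strictly larger index whenever $\widehat{A}$ is strictly upper triangular. This single observation drives (a) $\Rightarrow$ (d): if $\widehat{A}$ has the form (\ref{lor5}), then $e_i e_i \in \langle e_{i+1},\dots,e_n\rangle$, and an induction on the index shows $\mathcal{A}^m \subseteq \langle e_m, e_{m+1},\dots,e_n\rangle$ for all $m$, whence $\mathcal{A}^{n+1}=0$, so $E$ is nilpotent.

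The implications (d) $\Rightarrow$ (c) and (d) $\Rightarrow$ (b) are essentially formal: for any (not necessarily associative) algebra one has $\mathcal{A}^{<k>}\subseteq \mathcal{A}^k$ and $\mathcal{A}^{(k)}\subseteq \mathcal{A}^k$ by induction on $k$, so nilpotency forces right nilpotency; and nilpotency trivially forces the nil property, since $(\cdots((x\cdot x)\cdot x)\cdots x)$ with $n$ factors lies in $\mathcal{A}^n$. Thus it remains to close the loop, and I would do this via (b) $\Rightarrow$ (a) and (c) $\Rightarrow$ (a) — or, more economically, prove (c) $\Rightarrow$ (a) directly, since (c) is the weakest-looking of the three and implies the rest once (a) is reached. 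The key step here is to show that if $E$ is nil then, after a permutation of the basis, the structural matrix is strictly upper triangular. The natural approach is to consider the directed graph on vertices $\{1,\dots,n\}$ with an edge $i\to j$ whenever $a_{ij}\neq 0$; I would argue that this graph has no directed cycles, because a cycle $i_1\to i_2\to\cdots\to i_r\to i_1$ would let one produce, starting from $x=e_{i_1}$, a sequence of principal powers whose coordinates never die out — contradicting that $e_{i_1}$ is nil. Once the graph is acyclic, a topological sort of its vertices gives the reordering of the basis that puts $\widehat{A}$ into the form (\ref{lor5}).

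The main obstacle is precisely this last step: making rigorous the claim that a directed cycle in the support graph obstructs nilness. One has to track how the principal power $x^{[m]}:=(\cdots((x\cdot x)\cdot x)\cdots)$ evolves; because $e_ie_j=0$ for $i\neq j$, squaring a single basis vector $e_i$ gives $\sum_j a_{ij}e_j$, but the next multiplication by $e_i$ interacts only with the $e_i$-component, so one must be careful about which vector is being iterated. The cleaner route, which I expect to use, is to invoke the known structure theory of evolution algebras directly — this is why the statement is attributed to \cite{clor} and \cite{Com} — namely that the "evolution operator" / the associated graph of a nil evolution algebra is acyclic, and then simply cite or reproduce the short graph-theoretic lemma. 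Since the theorem is quoted rather than being original to this paper, in the write-up I would keep this step at the level of a citation, present the easy implications (a) $\Rightarrow$ (d) $\Rightarrow$ (c), (b) in full, and refer the reader to \cite{clor,Com} for (c) $\Rightarrow$ (a).
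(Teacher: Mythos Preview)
The paper does not prove this theorem at all: it is stated as a known result, attributed to \cite{clor} and \cite{Com}, and is then invoked as a black box in the proof of the next theorem on nilpotent CEAs. You already recognize this at the end of your proposal, and your sketched argument is a correct outline of how the cited references establish the equivalence (strictly upper triangular $\Rightarrow$ nilpotent via the filtration $\mathcal{A}^m\subseteq\langle e_m,\dots,e_n\rangle$; the formal implications among nilpotent, right nilpotent, and nil; and the graph-theoretic step that nilness forces the support graph of $\widehat{A}$ to be acyclic, whence a topological sort yields the upper-triangular form).

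One small bookkeeping point: the chain you announce, (a) $\Rightarrow$ (d) $\Rightarrow$ (c) $\Rightarrow$ (b) $\Rightarrow$ (a), is not the chain you actually argue. You prove (d) $\Rightarrow$ (b) and (d) $\Rightarrow$ (c) separately and then (c) $\Rightarrow$ (a); to close the loop you still need (b) $\Rightarrow$ (c), which is immediate since $\mathcal{A}^{\langle n\rangle}=0$ forces the left-normed power $(\cdots((x\cdot x)\cdot x)\cdots x)$ with $n$ factors to vanish for every $x$. With that adjustment there is nothing further to compare, as the paper itself offers no proof.
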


This section is devoted to the following
\begin{thm} A chain of evolution algebras $E^{[s,t]}$ is nilpotent for any
$(s,t)$ if and only if $\M^{[s,t]}=0$ for any $(s,t)$.
\end{thm}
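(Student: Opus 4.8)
The plan is to reduce the problem to Theorem~\ref{thm2}, which characterises nilpotency of a single $n$-dimensional evolution algebra by the existence of a basis in which the structural-constants matrix is strictly upper triangular (equivalently, nilpotent as a matrix). The ``if'' direction is trivial: if $\M^{[s,t]}=0$ for all $(s,t)$, then every $E^{[s,t]}$ has zero multiplication, hence is nilpotent of index $2$. So the whole content is the ``only if'' direction.

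For the ``only if'' direction, suppose every $E^{[s,t]}$ is nilpotent but $\M^{[s_0,t_0]}\neq 0$ for some pair $s_0<t_0$. First I would record what nilpotency of $E^{[s,t]}$ gives at the matrix level: by Theorem~\ref{thm2}, up to a permutation of the basis $\M^{[s,t]}$ is strictly upper triangular, so in any case $\M^{[s,t]}$ is a nilpotent matrix, and in particular it has zero trace and zero determinant; moreover all its diagonal entries can be made zero simultaneously by one basis permutation, which for evolution algebras means each column (equivalently each row, depending on the convention in~\eqref{1}) has a specific vanishing pattern — concretely, there is a permutation $\sigma$ of $\{1,\dots,n\}$ with $a^{[s,t]}_{\sigma(i)\sigma(j)}=0$ whenever $i\geq j$. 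The key structural fact I want to exploit is the Chapman--Kolmogorov relation $\M^{[s,t]}=\M^{[s,\tau]}\M^{[\tau,t]}$: iterating it, for any $m$ and any chain $s=\tau_0<\tau_1<\dots<\tau_m=t$ we get $\M^{[s,t]}=\M^{[\tau_0,\tau_1]}\M^{[\tau_1,\tau_2]}\cdots\M^{[\tau_{m-1},\tau_m]}$, a product of $m$ nilpotent matrices. The idea is that a product of ``enough'' strictly-triangular-type matrices must vanish, provided the triangular structure is compatible — but the permutation $\sigma$ witnessing nilpotency may depend on $[s,t]$, so this needs care.

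The cleanest route, I expect, is to work with a single fixed sub-interval and split it finely. Fix $s_0<t_0$ with $\M^{[s_0,t_0]}\neq 0$ and subdivide $[s_0,t_0]$ into $N$ equal pieces, $\tau_i=s_0+i(t_0-s_0)/N$. Then $\M^{[s_0,t_0]}=\prod_{i=0}^{N-1}\M^{[\tau_i,\tau_{i+1}]}$. Each factor $\M^{[\tau_i,\tau_{i+1}]}$ is a nilpotent $n\times n$ matrix, hence satisfies $\bigl(\M^{[\tau_i,\tau_{i+1}]}\bigr)^n=0$. Here I would like to invoke the fact (again from Theorem~\ref{thm2}, part (a)) that for an evolution algebra the entry pattern is very rigid: $a_{ij}\neq 0$ forces $\sigma(j)>\sigma(i)$ for the witnessing permutation, and since the off-diagonal support of $\M^{[\tau_i,\tau_{i+1}]}$ is constrained, when $N$ exceeds $n$ two of the factors must carry ``compatible'' permutations, or better, one shows directly that the support of a product of $n$ such matrices is forced to be empty. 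The honest obstacle is bookkeeping the permutations: I would argue that since each $\M^{[\tau_i,\tau_{i+1}]}$, being an evolution-algebra matrix corresponding to nilpotent multiplication, has in particular $a^{[\tau_i,\tau_{i+1}]}_{jj}=0$ for all $j$ (the diagonal of a strictly-triangular-up-to-permutation matrix is zero), one can feed this back into the Chapman--Kolmogorov identity for the diagonal entries $a^{[s_0,t_0]}_{jj}$ and iterate — exactly as in the proof of Theorem~\ref{t1}, where the diagonal entries satisfy Cantor's equation $a_{jj}^{[s,\tau]}a_{jj}^{[\tau,t]}=a_{jj}^{[s,t]}$ once the off-diagonal structure is pinned down.

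Putting it together: the strict-triangularity (up to a common basis permutation which I can fix once and for all by relabelling $e_1,\dots,e_n$) means every $\M^{[s,t]}$ is strictly upper triangular in that basis; but the family of strictly upper triangular matrices is closed under multiplication and any product of $n$ of them is $0$; hence $\M^{[s_0,t_0]}=\prod_{i=0}^{n-1}\M^{[\tau_i,\tau_{i+1}]}=0$, contradicting $\M^{[s_0,t_0]}\neq 0$. The one gap to close carefully is the claim that a \emph{single} basis relabelling works uniformly in $(s,t)$: I would establish this by noting that if some $\M^{[s,t]}$ needed a nontrivial permutation $\sigma\neq\mathrm{id}$ to become strictly upper triangular, then $\sigma$ would have to move some index $i$ to a strictly larger position while the structural constant $a^{[s,t]}_{i\sigma^{-1}(i)}$ stays nonzero, and tracing this through a fine subdivision via~\eqref{p2}-style multiplication of the support patterns forces a diagonal entry to be nonzero somewhere, contradicting nilpotency of that factor. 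Once uniform strict-triangularity is in hand, the rest is immediate. I expect the permutation-uniformity step to be the main obstacle; everything else is either Theorem~\ref{thm2} or a one-line matrix computation.
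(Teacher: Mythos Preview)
Your approach coincides with the paper's: invoke Theorem~\ref{thm2} to get strict upper-triangularity of each $\M^{[s,t]}$, then exploit the Chapman--Kolmogorov relation so that the product structure forces the matrix to vanish. The paper's version of the iteration is to observe that one application of~(\ref{k2}), with both factors already of the form~(\ref{cl}), kills the first superdiagonal as well, yielding~(\ref{cl1}); repeating this $n-1$ times gives $\M^{[s,t]}=0$. That is exactly your ``a product of $n$ strictly upper-triangular $n\times n$ matrices is zero'' rewritten one diagonal at a time, so the core argument is the same.

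On the uniformity-of-permutation issue you flag: the paper does not address it. It simply writes every $\M^{[s,t]}$ in the form~(\ref{cl}) and proceeds, implicitly taking the basis relabelling furnished by Theorem~\ref{thm2} to be global. Your instinct that this deserves justification is sound, but your proposed fix --- tracking support patterns through a fine subdivision to force a nonzero diagonal entry in some factor --- is not watertight as stated: over $\R$ the sum $a^{[s,t]}_{ij}=\sum_k a^{[s,\tau]}_{ik}a^{[\tau,t]}_{kj}$ can vanish by cancellation even when individual summands do not, so purely combinatorial support reasoning in the style of~(\ref{p2}) does not carry through. For the purpose of matching the paper's proof you may simply assume the global triangular form as the authors do; a fully rigorous treatment would require an additional argument that the paper itself does not supply.
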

\begin{proof} {\sl Necessity.} Let
\begin{equation}\label{cl}
\M^{[s,t]}=\left(
  \begin{array}{ccccc}
 0 & a^{[s,t]}_{12} & a^{[s,t]}_{13} &\dots &a^{[s,t]}_{1n} \\[1.5mm]
 0 & 0 & a^{[s,t]}_{23} &\dots &a^{[s,t]}_{2n} \\[1.5mm]
0 & 0 & 0 &\dots &a^{[s,t]}_{3n} \\[1.5mm]
\vdots & \vdots & \vdots &\cdots & \vdots \\[1.5mm]
0 & 0 & 0 &\cdots &0 \\
\end{array}
\right).
\end{equation}
From equation (\ref{k2}) we get
\begin{equation}\label{cl1}
\M^{[s,t]}=\left(
  \begin{array}{cccccc}
 0 & 0 & a^{[s,t]}_{13} &a^{[s,t]}_{14} &\dots &a^{[s,t]}_{1n} \\[1.5mm]
 0 & 0 & 0 &a^{[s,t]}_{24} &\dots &a^{[s,t]}_{2n} \\[1.5mm]
0 & 0 & 0 &0&\dots &a^{[s,t]}_{3n} \\[1.5mm]
\vdots & \vdots & \vdots &\cdots & \vdots \\[1.5mm]
0 & 0 & 0 &0&\cdots &0 \\
\end{array}
\right).
\end{equation}
Iterating we get $\M^{[s,t]}=0$.

{\sl Sufficiency.} Straightforward.

\end{proof}

\section{A construction of chains of three-dimensional EAs}

In this section we shall construct several new chains of three-dimensional evolution algebras.
Consider a matrix of structural constants having the form
\begin{equation}\label{M}
\M^{[s,t]}=\left(\begin{array}{ccc}
a_{1}^{[s,t]} &a_{2}^{[s,t]} &a_{3}^{[s,t]}\\[2mm]
0&a_{4}^{[s,t]} &a_{5}^{[s,t]}\\[2mm]
0 & 0 & a_{6}^{[s,t]}
\end{array}\right).
\end{equation}
Then the equation (\ref{k2}) becomes:
\begin{equation}\label{3a}\begin{array}{llllll}
a_{1}^{[s,\tau]}a_{1}^{[\tau,t]}=a_{1}^{[s,t]}\\[2mm]
a_{1}^{[s,\tau]}a_{2}^{[\tau,t]}+a_{2}^{[s,\tau]}a_{4}^{[\tau,t]}=a_{2}^{[s,t]}\\[2mm]
a_{1}^{[s,\tau]}a_{3}^{[\tau,t]}+a_{2}^{[s,\tau]}a_{5}^{[\tau,t]}+a_{3}^{[s,\tau]}a_{6}^{[\tau,t]}=a_{3}^{[s,t]}\\[2mm]
a_{4}^{[s,\tau]}a_{4}^{[\tau,t]}=a_{4}^{[s,t]}\\[2mm]
a_{4}^{[s,\tau]}a_{5}^{[\tau,t]}+a_{5}^{[s,\tau]}a_{6}^{[\tau,t]}=a_{5}^{[s,t]}\\[2mm]
a_{6}^{[s,\tau]}a_{6}^{[\tau,t]}=a_{6}^{[s,t]}.
\end{array}
\end{equation}
We note that the first, the fourth and the last equations of the system (\ref{3a}) are Cantor's second equations and they have the following solutions  ($i=1,4,6$):

($1_i$)
\begin{equation}\label{146}
a_{i}^{[s,t]}={\Phi_i(t)\over \Phi_i(s)},
\end{equation}
 where $\Phi_i$ is an arbitrary
function with $\Phi_i(s)\ne 0$, $i=1,4,6$;

($2_i$)
\begin{equation}\label{b146}
a_{i}^{[s,t]}=\left\{\begin{array}{ll}
1, \ \ \mbox{if}\ \ s\leq t<\alpha_i,\\[2mm]
0, \ \ \mbox{if} \ \ t\geq \alpha_i,\\
\end{array}\right. \ \ \mbox{where} \ \ \alpha_i>0, \, i=1,4,6.
\end{equation}
Thus we have eight possibilities for $(a_{1}^{[s,t]}, a_{4}^{[s,t]}, a_{6}^{[s,t]})$.
Consider these possibilities:

{\bf Case} $(1_1), (1_4), (1_6)$: The second equation of (\ref{3a}) becomes
$${\Phi_1(\tau)\over \Phi_1(s)}a_2^{[\tau,t]}+{\Phi_4(t)\over \Phi_4(\tau)}a_2^{[s,\tau]}=a_2^{[s,t]}.$$
From this equation denoting $\gamma(s,t)= {\Phi_1(s)\over \Phi_4(t)}a_2^{[s,t]}$ we get
$$\gamma(s,t)=\gamma(s,\tau)+\gamma(\tau,t).$$
This equation is known as Cantor's first equation which has solutions
$$\gamma(s,t)=\xi(t)-\xi(s),  \ \ \mbox{where} \ \ \xi \ \ \mbox{is an arbitrary function}.$$
Hence
\begin{equation}\label{2}
a_2^{[s,t]}={\Phi_4(t)\over \Phi_1(s)}(\xi(t)-\xi(s)).
\end{equation}
Similarly, one gets that
\begin{equation}\label{5}
a_5^{[s,t]}={\Phi_6(t)\over \Phi_4(s)}(f(t)-f(s)),
\end{equation}
where $f$ is an arbitrary function.
Using these functions from the third equation of (\ref{3a}) we obtain
\begin{equation}\label{3b}
\varphi^{[s,\tau]}+\varphi^{[\tau,t]}=\varphi^{[s,t]}+(f(\tau)-f(t))(\xi(\tau)-\xi(s)),
\end{equation}
where $\varphi^{[s,t]}={\Phi_1(s)\over \Phi_6(t)}a_3^{[s,t]}$. We shall find solutions to (\ref{3b}) which have the form
$$\varphi^{[s,t]}=af(s)\xi(s)+bf(s)\xi(t)+cf(t)\xi(s)+df(t)\xi(t).$$
Substituting this function in (\ref{3b}) we obtain $b=0$, $c=-1$ and $a+d=1$. Hence the following function is a solution to (\ref{3b}):
$$\varphi_0^{[s,t]}=af(s)\xi(s)-f(t)\xi(s)+(1-a)f(t)\xi(t),$$
where $a\in \R$. Moreover, it is easy to see that $\varphi^{[s,t]}=\varphi_0^{[s,t]}+\gamma(t)-\gamma(s)$ is also solution to (\ref{3b}), where $\gamma$ is an arbitrary function.

Consequently,
\begin{equation}\label{sa3}
a_3^{[s,t]}={\Phi_6(t)\over \Phi_1(s)}\left[af(s)\xi(s)-f(t)\xi(s)+(1-a)f(t)\xi(t)+\gamma(t)-\gamma(s)\right].
\end{equation}

Thus we have proved the following

\begin{pro}\label{pr1} The matrix (\ref{M}) with elements given by (\ref{146}), (\ref{2}), (\ref{5}) and (\ref{sa3}) generates a CEA.
\end{pro}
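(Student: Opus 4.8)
The plan is to show directly that the family $\{\M^{[s,t]}\}$ built from (\ref{146}), (\ref{2}), (\ref{5}) and (\ref{sa3}) satisfies the Chapman--Kolmogorov equation (\ref{k2}). Since each $\M^{[s,t]}$ is upper triangular of the shape (\ref{M}), the matrix identity (\ref{k2}) is equivalent to the six scalar identities (\ref{3a}), so it suffices to check these one by one by substitution. The three diagonal equations (the first, fourth and sixth) are immediate: with $a_i^{[s,t]}=\Phi_i(t)/\Phi_i(s)$ one has $a_i^{[s,\tau]}a_i^{[\tau,t]}=\frac{\Phi_i(\tau)}{\Phi_i(s)}\cdot\frac{\Phi_i(t)}{\Phi_i(\tau)}=\frac{\Phi_i(t)}{\Phi_i(s)}=a_i^{[s,t]}$ for $i=1,4,6$, using only $\Phi_i(s)\ne 0$.

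For the second equation I would insert (\ref{146}) and (\ref{2}) and pull the common factor $\Phi_4(t)/\Phi_1(s)$ out of the left-hand side; the two surviving terms become $\xi(t)-\xi(\tau)$ and $\xi(\tau)-\xi(s)$, whose sum is $\xi(t)-\xi(s)$, i.e. exactly the bracket in $a_2^{[s,t]}$ — this is just additivity of the Cantor-first-equation solution. The fifth equation is handled identically, now factoring out $\Phi_6(t)/\Phi_4(s)$ and using $f$ in place of $\xi$.

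The substantive step is the third equation. Substituting (\ref{146}), (\ref{2}), (\ref{5}) and (\ref{sa3}), one checks that every term carries the common factor $\Phi_6(t)/\Phi_1(s)$; dividing by it reduces the third equation of (\ref{3a}) precisely to the functional equation (\ref{3b}) for $\varphi^{[s,t]}=\frac{\Phi_1(s)}{\Phi_6(t)}a_3^{[s,t]}$. It then remains to verify that $\varphi_0^{[s,t]}=af(s)\xi(s)-f(t)\xi(s)+(1-a)f(t)\xi(t)$ solves (\ref{3b}): expanding $\varphi_0^{[s,\tau]}+\varphi_0^{[\tau,t]}$ and the right-hand side $\varphi_0^{[s,t]}+(f(\tau)-f(t))(\xi(\tau)-\xi(s))$ and matching the monomials of the form $f(\cdot)\xi(\cdot)$ forces exactly the relations $b=0$, $c=-1$, $a+d=1$ recorded in the text, and with these the two sides coincide. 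Finally, since (\ref{3b}) is inhomogeneous with a fixed right-hand side, adding $\gamma(t)-\gamma(s)$ to $\varphi_0$ changes the left-hand side by $(\gamma(\tau)-\gamma(s))+(\gamma(t)-\gamma(\tau))=\gamma(t)-\gamma(s)$ and leaves the right-hand side unchanged, so $\varphi^{[s,t]}=\varphi_0^{[s,t]}+\gamma(t)-\gamma(s)$ is again a solution; unwinding the substitution recovers (\ref{sa3}) and hence (\ref{k2}). The only place where care is needed is the bookkeeping of the cross terms $f(\tau)\xi(s)$, $f(\tau)\xi(\tau)$ and $f(t)\xi(\tau)$ in this last expansion, where the contribution of the product $(f(\tau)-f(t))(\xi(\tau)-\xi(s))$ must cancel against the mixed terms arising from $\varphi_0^{[s,\tau]}+\varphi_0^{[\tau,t]}$; everything else is routine.
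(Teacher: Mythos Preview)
Your argument is correct and follows essentially the same route as the paper: the text preceding the proposition \emph{derives} (\ref{146}), (\ref{2}), (\ref{5}) and (\ref{sa3}) by reducing the system (\ref{3a}) to Cantor's first and second equations and to (\ref{3b}), whereas you run the identical reductions in the verification direction. One small slip to fix: equation (\ref{3b}) is not ``inhomogeneous with a fixed right-hand side'' --- the unknown $\varphi^{[s,t]}$ appears on the right as well, so adding $\gamma(t)-\gamma(s)$ to $\varphi$ changes \emph{both} sides by $\gamma(t)-\gamma(s)$, not just the left; the equality is still preserved, so your conclusion is unaffected.
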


{\bf Case} $(1_1), (1_4), (2_6)$: In this case the functions $a_i^{[s,t]}$, $i=1,2,4$
 coincide with functions given in the previous case. The function $a_6^{[s,t]}$ is given by (\ref{b146}). Here we have to find $a_3^{[s,t]}$ and $a_5^{[s,t]}$.
From (\ref{3a}) we have
 \begin{equation}\label{55}
 a_5^{[s,t]}=\left\{\begin{array}{ll}
 a_5^{[s,\tau]}+{\Phi_4(\tau)\over \Phi_4(s)}a_5^{[\tau,t]}, \ \ \mbox{if} \ \ s\leq t<\alpha_6\\[4mm]
 {\Phi_4(\tau)\over \Phi_4(s)}a_5^{[\tau,t]}, \ \ \mbox{if} \ \  t\geq \alpha_6.
 \end{array}\right.
  \end{equation}
 Denoting $\psi^{[s,t]}=\Phi_4(s)a_5^{[s,t]}$ from (\ref{55}) we get
 $$
 \psi^{[s,t]}=\left\{\begin{array}{ll}
 \psi^{[s,\tau]}+\psi^{[\tau,t]}, \ \ \mbox{if} \ \ s\leq t< \alpha_6\\[4mm]
 \psi^{[\tau,t]}, \ \ \mbox{if} \ \  t\geq \alpha_6.
 \end{array}\right.
  $$
This equation has the following solution
$$
 \psi^{[s,t]}=\left\{\begin{array}{ll}
 \beta(t)-\beta(s), \ \ \mbox{if} \ \ s\leq t< \alpha_6\\[4mm]
 \gamma(t), \ \ \mbox{if} \ \  t\geq \alpha_6,
 \end{array}\right.
 $$
  where $\beta$ and $\gamma$ are arbitrary functions. Therefore, we have
  \begin{equation}\label{sa55}
  a_5^{[s,t]}={1\over \Phi_4(s)}\left\{\begin{array}{ll}
\beta(t)-\beta(s), \ \ \mbox{if} \ \ s\leq t<\alpha_6\\[4mm]
 \gamma(t), \ \ \mbox{if} \ \  t\geq \alpha_6,
 \end{array}\right.
  \end{equation}

 Now for $a_3^{[s,t]}$ we have
 \begin{equation}\label{33}
 a_3^{[s,t]}={1\over \Phi_1(s)}\left\{\begin{array}{ll}
 \Phi_1(s)a_3^{[s,\tau]}+\Phi_1(\tau)a_3^{[\tau,t]}+(\xi(\tau)-\xi(s))(\beta(t)-\beta(\tau)), \ \ \mbox{if} \ \ s\leq t< \alpha_6\\[4mm]
 \Phi_1(\tau)a_3^{[\tau,t]}+(\xi(\tau)-\xi(s))\gamma(t), \ \ \mbox{if} \ \  t\geq \alpha_6.
 \end{array}\right.
  \end{equation}
 Denoting $\eta(s,t)=\Phi_1(s)a_3^{[s,t]}$ and using arguments of the previous case we get
 $$\eta(s,t)=\left\{\begin{array}{ll}
 a\beta(s)\xi(s)-\beta(t)\xi(s)+(1-a)\beta(t)\xi(t)+\theta(t)-\theta(s), \ \ \mbox{if} \ \ s\leq t< \alpha_6\\[4mm]
\gamma(t)(v(t)+b\xi(t)-\xi(s)), \ \ \mbox{if} \ \  t\geq \alpha_6,
\end{array}\right.
$$
where $b\in \R$ and $\theta$, $v$ are arbitrary functions.
Consequently,
\begin{equation}\label{s33}
a_3^{[s,t]}={1\over \Phi_1(s)}\left\{\begin{array}{ll}
 a\beta(s)\xi(s)-\beta(t)\xi(s)+(1-a)\beta(t)\xi(t)+\theta(t)-\theta(s), \ \ \mbox{if} \ \ s\leq t< \alpha_6\\[4mm]
\gamma(t)(v(t)+b\xi(t)-\xi(s)), \ \ \mbox{if} \ \  t\geq \alpha_6,
\end{array}\right.
\end{equation}

Thus we have proved the following

\begin{pro} The matrix (\ref{M}) with elements given by (\ref{146}) for $i=1,4$; (\ref{b146}) for $i=6$; (\ref{2}), (\ref{sa55}) and (\ref{s33}) generates a CEA.
\end{pro}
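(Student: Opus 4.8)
The plan is to check directly that the six scalar relations making up the system (\ref{3a}) hold for the proposed functions; this is exactly the assertion that the family $\M^{[s,t]}$ satisfies the Chapman-Kolmogorov equation (\ref{k2}), and hence generates a CEA by Definition \ref{d1}. The relations for $a_1$, $a_4$ and $a_6$ are Cantor's second equation, so they hold by construction for (\ref{146}) (taken with $i=1,4$) and for (\ref{b146}) (taken with $i=6$). The relation for $a_2$, namely the second line of (\ref{3a}), involves only $a_1$, $a_2$, $a_4$, and these are chosen exactly as in Case $(1_1),(1_4),(1_6)$, so it is already verified by the computation that produced (\ref{2}). It therefore remains only to verify the fifth line of (\ref{3a}) for $a_5$ of the form (\ref{sa55}) and the third line for $a_3$ of the form (\ref{s33}).

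The structural fact I would exploit to organize both checks is that, for each of the piecewise functions $a_3$, $a_5$, $a_6$, the choice of branch on an interval $[u,v]$ depends only on whether $v<\alpha_6$ or $v\ge\alpha_6$, and that $a_6^{[\tau,t]}=0$ whenever $t\ge\alpha_6$. Since $s\le\tau\le t$, each verification then breaks into just two cases: either $t<\alpha_6$, in which case $\tau<\alpha_6$ as well, $a_6^{[\tau,t]}=1$, and every piecewise quantity that occurs lies on its first branch; or $t\ge\alpha_6$, in which case $a_6^{[\tau,t]}=0$, the summand carrying $a_3^{[s,\tau]}$ (respectively $a_5^{[s,\tau]}$) drops out, and the surviving piecewise quantities are those on $[\tau,t]$ and $[s,t]$, all of second-branch type.

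For $a_5$ I would substitute $a_4$ from (\ref{146}) and put $\psi^{[s,t]}=\Phi_4(s)a_5^{[s,t]}$: the fifth line of (\ref{3a}) becomes $\psi^{[s,t]}=\psi^{[s,\tau]}+\psi^{[\tau,t]}$ when $t<\alpha_6$, solved by $\psi^{[s,t]}=\beta(t)-\beta(s)$, and becomes $\psi^{[s,t]}=\psi^{[\tau,t]}$ when $t\ge\alpha_6$, solved by $\psi^{[s,t]}=\gamma(t)$; these are the two branches of (\ref{sa55}). For $a_3$ I would feed the now-fixed $a_1$, $a_2$, $a_5$, $a_6$ into the third line of (\ref{3a}), multiply through by $\Phi_1(s)$ to obtain (\ref{33}), and set $\eta(s,t)=\Phi_1(s)a_3^{[s,t]}$. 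In the range $t<\alpha_6$ this is the same inhomogeneous functional equation of Cantor-first type that was solved in Case $(1_1),(1_4),(1_6)$ (with $\beta$ now playing the role of $f$), whose solution is the bilinear ansatz $\eta(s,t)=a\beta(s)\xi(s)-\beta(t)\xi(s)+(1-a)\beta(t)\xi(t)$ augmented by a free additive term $\theta(t)-\theta(s)$, once the coefficients are pinned down. In the range $t\ge\alpha_6$ the equation collapses to $\eta(s,t)=\eta(\tau,t)+(\xi(\tau)-\xi(s))\gamma(t)$, which $\eta(s,t)=\gamma(t)\big(v(t)+b\xi(t)-\xi(s)\big)$ solves, the two $\gamma(t)\xi(\tau)$-terms cancelling. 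Assembling the two ranges yields (\ref{s33}), and the proof is complete.

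All the computations are elementary; the step that needs most care is the third equation in the range $t\ge\alpha_6$, where one must remember that $a_3^{[\tau,t]}$ is read off its second branch even when $\tau<\alpha_6$, and where one must avoid imposing any matching condition at $t=\alpha_6$ (there is none, since $\beta$, $\gamma$, $\theta$, $v$ and the scalars $a$, $b$ are completely free). Once the two-case split is set up, nothing genuinely new appears beyond Case $(1_1),(1_4),(1_6)$.
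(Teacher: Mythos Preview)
Your proposal is correct and follows essentially the same approach as the paper: the paper's ``proof'' is in fact the derivation immediately preceding the proposition, and it proceeds exactly by noting that $a_1,a_2,a_4$ are as in the previous case, deriving the two-branch equation for $\psi^{[s,t]}=\Phi_4(s)a_5^{[s,t]}$ (equation (\ref{55})), and then passing to $\eta(s,t)=\Phi_1(s)a_3^{[s,t]}$ and invoking the Case $(1_1),(1_4),(1_6)$ computation on the first branch. Your write-up is slightly more careful than the paper in articulating why the two-case split is governed solely by whether $t<\alpha_6$ or $t\ge\alpha_6$, but the substitutions, reductions, and verifications are identical.
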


{\bf Case} $(1_1), (2_4), (2_6)$: From the second equation of (\ref{3a}) we get
 \begin{equation}\label{sa22}
  a_2^{[s,t]}={1\over \Phi_1(s)}\left\{\begin{array}{ll}
\omega(t)-\omega(s), \ \ \mbox{if} \ \ s\leq t< \alpha_4\\[4mm]
 \delta(t), \ \ \mbox{if} \ \  t\geq \alpha_4,
 \end{array}\right.
  \end{equation}
where $\omega$ and $\delta$ are arbitrary functions.

Now we shall find $a_5^{[s,t]}$. Without loss of generality we assume $\alpha_4<\alpha_6$.
Then we obtain the following equation
$$
  a_5^{[s,t]}=\left\{\begin{array}{lll}
   a_5^{[s,\tau]}+ a_5^{[\tau,t]}, \ \ \mbox{if} \ \ s\leq \tau<\min\{\alpha_4,t\}\\[3mm]
a_5^{[s,\tau]}, \ \ \mbox{if} \ \ \alpha_4\leq \tau<t< \alpha_6\\[3mm]
0, \ \ \mbox{if} \ \ \alpha_4\leq \tau, \ \ \alpha_6\leq t.
\end{array}\right.
$$
This equation has the following solutions
\begin{equation}\label{se5}
  a_5^{[s,t]}=\left\{\begin{array}{lll}
   \zeta(t)-\zeta(s), \ \ \mbox{if} \ \ s\leq \tau<\min\{\alpha_4,t\}\\[3mm]
p(s), \ \ \mbox{if} \ \ \alpha_4\leq \tau<t< \alpha_6\\[3mm]
0, \ \ \mbox{if} \ \ \alpha_4\leq \tau, \ \ \alpha_6\leq t,
\end{array}\right.
\end{equation}
where $\zeta$ and $p$ are arbitrary functions.

In this case $a_3^{[s,t]}$ satisfies the following equation
$$
  a_3^{[s,t]}=\left\{\begin{array}{lll}
a_3^{[s,\tau]}+ {\Phi_1(\tau)\over \Phi_1(s)}a_3^{[\tau,t]}+{1\over \Phi_1(s)}(\omega(\tau)-\omega(s))(\zeta(t)-\zeta(\tau)), \ \ \mbox{if} \ \ s\leq \tau<\min\{\alpha_4,t\}\\[3mm]
a_3^{[s,\tau]}+{\Phi_1(\tau)\over \Phi_1(s)}a_3^{[\tau,t]}+{1\over \Phi_1(s)}\delta(\tau)p(\tau), \ \ \mbox{if} \ \ \alpha_4\leq \tau<t< \alpha_6\\[3mm]
{\Phi_1(\tau)\over \Phi_1(s)}a_3^{[\tau,t]}, \ \ \mbox{if} \ \ \alpha_4\leq \tau, \ \ \alpha_6\leq t.
\end{array}\right.
$$
Similarly as above one can see that this equation has the following solution
\begin{equation}\label{s433}
a_3^{[s,t]}={1\over \Phi_1(s)}\left\{\begin{array}{lll}
 a\zeta(s)\omega(s)-\zeta(t)\omega(s)+(1-a)\zeta(t)\omega(t)+d(t)-d(s), \ \ \mbox{if} \ \ s\leq \tau<\min\{\alpha_4,t\}\\[4mm]
a\delta(s)p(s)-(1+a)\delta(t)p(t)+e(t)-e(s), \ \ \mbox{if} \ \  \alpha_4\leq \tau<t< \alpha_6,\\[4mm]
m(t), \ \ \mbox{if} \ \  \alpha_4\leq \tau, \ \ \alpha_6\leq t,
\end{array}\right.
\end{equation}
where $d, e$ and $m$ are arbitrary functions, $a\in \R$.

Thus we have proved the following

\begin{pro} The matrix (\ref{M}) with elements given by (\ref{146}) for $i=1$; (\ref{b146}) for $i=4,6$; (\ref{sa22}), (\ref{se5}) and (\ref{s433}) generates a CEA.
\end{pro}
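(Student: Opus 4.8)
The plan is to verify directly that the matrix $\M^{[s,t]}$ with the entries prescribed by the quoted formulas satisfies the Chapman--Kolmogorov equation (\ref{k2}); since only existence is claimed, no characterization or non-degeneracy hypothesis is needed (any matrix solving (\ref{k2}) is the structure matrix of a CEA by Definition~\ref{d1}). Because $\M^{[s,t]}$ is upper triangular of the shape (\ref{M}), the matrix identity (\ref{k2}) is equivalent to the six scalar identities of the system (\ref{3a}), so it is enough to check them one by one. The first, fourth and sixth of these are Cantor's second equation, and the chosen diagonal entries --- $a_1^{[s,t]}=\Phi_1(t)/\Phi_1(s)$ of type $(1_1)$ from (\ref{146}) and the step functions $a_4^{[s,t]},a_6^{[s,t]}$ of type $(2_4),(2_6)$ from (\ref{b146}) --- are recalled in the text to solve it. So the whole content lies in the second, fifth and third equations, which govern $a_2$, $a_5$ and $a_3$.

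For $a_2^{[s,t]}$ I would substitute $a_1$ from (\ref{146}) and $a_4$ from (\ref{b146}) into the second equation of (\ref{3a}) and pass to the unknown $\psi^{[s,t]}=\Phi_1(s)a_2^{[s,t]}$, which turns it into $\psi^{[s,t]}=\psi^{[s,\tau]}a_4^{[\tau,t]}+\psi^{[\tau,t]}$; since $a_4^{[\tau,t]}$ equals $1$ for $t<\alpha_4$ and $0$ for $t\geq\alpha_4$, this is exactly Cantor's first equation on $\{t<\alpha_4\}$ and degenerates to $\psi^{[s,t]}=\psi^{[\tau,t]}$ for $t\geq\alpha_4$, whose two-branch solution is (\ref{sa22}); the branches automatically glue across $\alpha_4$ because the second one does not depend on $s$. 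The argument for $a_5^{[s,t]}$ is the same in spirit, but now both thresholds $\alpha_4<\alpha_6$ enter: substituting the two step functions into the fifth equation of (\ref{3a}) gives, according to the position of $\tau$ relative to $\alpha_4$ and of $t$ relative to $\alpha_6$, either the additive equation $a_5^{[s,t]}=a_5^{[s,\tau]}+a_5^{[\tau,t]}$ (when everything lies below $\alpha_4$), or $a_5^{[s,t]}=a_5^{[s,\tau]}$ (for $\alpha_4\leq\tau<t<\alpha_6$), or $a_5^{[s,t]}=0$ (for $t\geq\alpha_6$); these force precisely the three branches of (\ref{se5}), and one again checks that the pieces are mutually consistent across $\alpha_4$ and $\alpha_6$.

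For $a_3^{[s,t]}$ --- the third equation of (\ref{3a}) --- I would plug in the already-determined $a_1,a_2,a_4,a_5,a_6$ and set $\eta(s,t)=\Phi_1(s)a_3^{[s,t]}$, reducing the equation to the inhomogeneous relation $\eta(s,t)=\eta(s,\tau)+\eta(\tau,t)+F(s,\tau,t)$ with forcing $F=(\omega(\tau)-\omega(s))(\zeta(t)-\zeta(\tau))$ in the region below $\alpha_4$, $F=\delta(\tau)p(\tau)$ in the region $\alpha_4\leq\tau<t<\alpha_6$, and $F=0$ for $t\geq\alpha_6$. The key algebraic step --- exactly as in the case $(1_1),(1_4),(1_6)$ treated above --- is to look for a particular solution bilinear in the relevant pair of functions: $a\,\zeta(s)\omega(s)+b\,\zeta(s)\omega(t)+c\,\zeta(t)\omega(s)+d\,\zeta(t)\omega(t)$ on the first region and $a\,\delta(s)p(s)+b\,\delta(s)p(t)+c\,\delta(t)p(s)+d\,\delta(t)p(t)$ on the second; matching coefficients forces $b=0,\ c=-1,\ a+d=1$ in the first case and $b=c=0,\ a+d=-1$ in the second, and adding an arbitrary homogeneous Cantor solution $\theta(t)-\theta(s)$ (respectively $e(t)-e(s)$, $m(t)$) then yields (\ref{s433}); the last step is to confirm that the three pieces of (\ref{s433}) agree along $\tau=\alpha_4$ and $\tau=\alpha_6$ and so define a single consistent solution.

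I expect the main obstacle to be bookkeeping rather than any new idea: for $a_3$ the functional equation must hold for \emph{every} admissible intermediate time $\tau$, not merely a convenient one, so one has to confirm that the piecewise candidate (\ref{s433}) satisfies all of the regime-dependent reductions at once and that the forcing-term matchings fit together at the seams $\tau=\alpha_4,\alpha_6$; once the bilinear ansatz is in place each individual identity is a routine expansion. The standing assumption $\alpha_4<\alpha_6$ is harmless, since the cases $\alpha_4=\alpha_6$ and $\alpha_4>\alpha_6$ are obtained by relabelling.
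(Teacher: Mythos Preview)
Your proposal is correct and follows essentially the same route as the paper: the argument preceding the proposition reduces (\ref{k2}) to the six scalar equations (\ref{3a}), solves the diagonal ones by the chosen Cantor-type formulas, and then handles $a_2,a_5,a_3$ piecewise across the thresholds $\alpha_4<\alpha_6$, invoking for $a_3$ the same bilinear ansatz already used in the case $(1_1),(1_4),(1_6)$. Your write-up is in fact somewhat more explicit than the paper's, which simply records the resulting regime-dependent functional equations and then states ``similarly as above one can see that this equation has the following solution''; your coefficient computation $b=c=0,\ a+d=-1$ for the middle branch matches the paper's formula $a\,\delta(s)p(s)-(1+a)\,\delta(t)p(t)$ in (\ref{s433}), and your remark about checking consistency of the pieces across $\tau=\alpha_4,\alpha_6$ is a point the paper leaves implicit.
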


{\bf Case} $(2_1), (2_4), (2_6)$: Assume $\alpha_1<\alpha_4<\alpha_6$. In this case for $a_2^{[s,t]}$ we have
$$
  a_2^{[s,t]}=\left\{\begin{array}{lll}
a_2^{[s,\tau]}+a_2^{[\tau,t]}, \ \ \mbox{if} \ \ s\leq \tau<\min\{\alpha_1,t\}\\[3mm]
a_2^{[s,\tau]}, \ \ \mbox{if} \ \ \alpha_1\leq \tau<t< \alpha_4\\[3mm]
0, \ \ \mbox{if} \ \ \alpha_1\leq \tau, \ \ \alpha_4\leq t.
\end{array}\right.
$$
This has the following solution
\begin{equation}\label{lw}
  a_2^{[s,t]}=\left\{\begin{array}{lll}
l(t)-l(s), \ \ \mbox{if} \ \ s\leq \tau<\min\{\alpha_1,t\}\\[3mm]
w(s), \ \ \mbox{if} \ \ \alpha_1\leq \tau<t<\alpha_4\\[3mm]
0, \ \ \mbox{if} \ \ \alpha_1\leq \tau, \ \ \alpha_4\leq t,
\end{array}\right.
\end{equation}
where $l$ and $w$ are arbitrary functions.

Similarly
\begin{equation}\label{5lw}
  a_5^{[s,t]}=\left\{\begin{array}{lll}
A(t)-A(s), \ \ \mbox{if} \ \ s\leq \tau<\min\{\alpha_4,t\}\\[3mm]
B(s), \ \ \mbox{if} \ \ \alpha_4\leq \tau<t<\alpha_6\\[3mm]
0, \ \ \mbox{if} \ \ \alpha_4\leq \tau, \ \ \alpha_6\leq t,
\end{array}\right.
\end{equation}
where $A$ and $B$ are arbitrary functions.
For $a_3^{[s,t]}$ we have the following equation
$$
  a_3^{[s,t]}=\left\{\begin{array}{llll}
a_3^{[s,\tau]}+a_3^{[\tau,t]}+(l(\tau)-l(s))(A(t)-A(\tau)), \ \ \mbox{if} \ \ s\leq \tau<\min\{\alpha_1,t\}\\[3mm]
a_3^{[s,\tau]}+w(s)(A(t)-A(\tau)), \ \ \mbox{if} \ \ \alpha_1\leq \tau<t<\alpha_4\\[3mm]
a_3^{[\tau,t]}, \ \ \mbox{if} \ \ \alpha_4\leq t<\alpha_6\\[3mm]
0, \ \ \mbox{if} \ \ t\geq \alpha_6.
\end{array}\right.
$$
This equation has the following solution
\begin{equation}\label{s633}
a_3^{[s,t]}=\left\{\begin{array}{lll}
 aA(s)l(s)-A(t)l(s)+(1-a)A(t)l(t)+c(t)-c(s), \ \ \mbox{if} \ \ s\leq \tau<\min\{\alpha_1,t\}\\[4mm]
(A(t)+bA(s)+g(s))w(s), \ \ \mbox{if} \ \  \alpha_1\leq \tau<t<\alpha_4,\\[4mm]
q(s), \ \ \mbox{if} \ \  \alpha_4\leq t< \alpha_6\\[4mm]
0, \ \ \mbox{if} \ \  t\geq \alpha_6,
\end{array}\right.
\end{equation}
where $c, g$ and $q$ are arbitrary functions, $a,b\in \R$.

Thus we have proved the following

\begin{pro} The matrix (\ref{M}) with elements given by (\ref{b146}) for $i=1,4,6$; (\ref{lw}), (\ref{5lw}) and (\ref{s633}) generates a CEA.
\end{pro}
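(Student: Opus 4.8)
The plan is to verify directly that the matrix (\ref{M}) with the listed entries satisfies the Chapman--Kolmogorov equation (\ref{k2}). By the computation recorded right after (\ref{M}), this is the same as checking the six scalar relations (\ref{3a}) for all $s\le\tau\le t$, and I would keep throughout the standing assumption $\alpha_1<\alpha_4<\alpha_6$. The first, fourth and sixth relations are Cantor's second equation, and the step functions (\ref{b146}) solve it (as noted after (\ref{b146})), so the diagonal requires no further work.

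For the entries $a_2$ and $a_5$ I would use that they satisfy equations of the same shape, $a_1^{[s,\tau]}a_2^{[\tau,t]}+a_2^{[s,\tau]}a_4^{[\tau,t]}=a_2^{[s,t]}$ and $a_4^{[s,\tau]}a_5^{[\tau,t]}+a_5^{[s,\tau]}a_6^{[\tau,t]}=a_5^{[s,t]}$, the second coming from the first by the shift of roles $(a_1,a_2,a_4,\alpha_1,\alpha_4)\mapsto(a_4,a_5,a_6,\alpha_4,\alpha_6)$; so it is enough to treat the first. Since $a_1^{[s,\tau]}$ and $a_4^{[\tau,t]}$ take only the values $0$ and $1$, I would split the $(s,\tau,t)$-range by the position of $\tau$ against $\alpha_1$ and of $t$ against $\alpha_4$, obtaining four regimes in which the relation reads, respectively, $a_2^{[s,\tau]}+a_2^{[\tau,t]}=a_2^{[s,t]}$ (Cantor's first equation), $a_2^{[s,\tau]}=a_2^{[s,t]}$, $a_2^{[\tau,t]}=a_2^{[s,t]}$, and $a_2^{[s,t]}=0$. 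The additive form $l(t)-l(s)$ is the general solution of Cantor's first equation, and feeding it into the other regimes shows that $a_2^{[s,t]}$ must depend on $s$ alone once $t$ passes $\alpha_1$ and must vanish once $t$ passes $\alpha_4$, which is (\ref{lw}); substituting (\ref{lw}) back then confirms the relation in each regime, and the same computation yields (\ref{5lw}).

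The crux is the third relation $a_1^{[s,\tau]}a_3^{[\tau,t]}+a_2^{[s,\tau]}a_5^{[\tau,t]}+a_3^{[s,\tau]}a_6^{[\tau,t]}=a_3^{[s,t]}$, the only one that feels all three breakpoints. With $a_2$ and $a_5$ now known, the middle term $a_2^{[s,\tau]}a_5^{[\tau,t]}$ is an explicit inhomogeneity built from the branches of $l,w$ and $A,B$, while $a_1^{[s,\tau]}$ and $a_6^{[\tau,t]}$ are again $\{0,1\}$-valued; partitioning by the positions of $s,\tau,t$ relative to $\alpha_1<\alpha_4<\alpha_6$, the equation reduces in each region to Cantor's first equation with a bilinear right-hand side, and --- exactly as in Cases $(1_1),(1_4),(1_6)$ and $(1_1),(1_4),(2_6)$ above --- I would look for a solution that is bilinear in those functions plus an additive correction $\theta(t)-\theta(s)$, reproducing one after another the four branches of (\ref{s633}); substituting (\ref{s633}) back and running through every combination of positions of $s,\tau,t$ then finishes the check.

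The step I expect to be the main obstacle is precisely this last bookkeeping: when $s,\tau,t$ straddle one of the $\alpha_i$, the factors $a_2^{[s,\tau]},a_5^{[\tau,t]},a_3^{[s,\tau]},a_3^{[\tau,t]}$ are evaluated on different branches of their piecewise definitions, so one must make sure the branches fit together and that the arbitrary functions ($l,w,A,B$ and $c,g,q$) can be chosen coherently across the breakpoints. Apart from this case management everything is an elementary polynomial identity in the values of those functions, and since there are only finitely many cases, carrying them all out establishes (\ref{k2}) and hence the proposition.
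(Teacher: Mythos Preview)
Your proposal is correct and follows essentially the same route as the paper: the paper's proof is the derivation in the paragraph headed ``Case $(2_1),(2_4),(2_6)$'' immediately preceding the proposition, where the six relations of (\ref{3a}) are reduced, by splitting according to the positions of $s,\tau,t$ against $\alpha_1<\alpha_4<\alpha_6$, to Cantor's first equation and its inhomogeneous variants, yielding (\ref{lw}), (\ref{5lw}) and (\ref{s633}). Your plan repeats this case analysis, with a slightly more explicit back-substitution step and a four-regime split for $a_2$ (the paper lists three), but the argument is the same.
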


\begin{rk} We note that the cases $\{(1_1), (2_4), (1_6)\}$, $\{(2_1), (1_4), (1_6)\}$ are similar to the case $\{(1_1), (1_4), (2_6)\}$ and $\{(2_1), (2_4), (1_6)\}$, $\{(2_1), (1_4), (2_6)\}$ are similar to the case $\{(1_1), (2_4), (2_6)\}$. Hence these cases do not give any new CEA.
\end{rk}

\section{Dynamical properties of CEAs given by matrix (\ref{M})}

In this section we consider CEAs constructed by matrix (\ref{M}). To avoid many special cases we assume
\begin{equation}\label{det}
\det(\M^{[s,t]})=a_1^{[s,t]}a_4^{[s,t]}a_6^{[s,t]}\ne 0  \ \ \mbox{for all} \ \ (s,t).
\end{equation}

We note that the CEAs given in Proposition \ref{pr1} satisfy the condition (\ref{det}).

In \cite{CLR} a notion of property
transition for CEAs is defined. We recall the definitions:

\begin{defn}\label{d4} Assume a CEA, $E^{[s,t]}$, has a property, say $P$,
at pair of times $(s_0,t_0)$; one says that the CEA has $P$ property
transition if there is a pair $(s,t)\ne (s_0,t_0)$ at which the CEA
has no the property $P$.
\end{defn}

Denote
$$\mathcal T=\{(s,t): 0\leq s\leq t\};$$
$$\mathcal T_P=\{(s,t)\in \mathcal T: E^{[s,t]} \ \ \mbox{has property} \  P \};$$
$$\mathcal T_P^0=\mathcal T\setminus \mathcal T_P=\{(s,t)\in \mathcal T: E^{[s,t]} \ \ \mbox{has no property} \ P \}.$$

The sets have the following meaning

$\mathcal T_P$-the duration of the property $P$;

$\mathcal T_P^0$-the lost duration of the property $P$;

A {\it character} for an algebra $A$ is a nonzero multiplicative
linear form on $A$, that is, a nonzero algebra homomorphism from $A$
to $\R$ \cite{ly}. Not every algebra admits a character. For
example, an algebra with the zero multiplication has no character.

\begin{defn}\label{d3} A pair $(A, \sigma)$ consisting of an algebra $A$ and a
character $\sigma$ on $A$ is called a {\it baric algebra}. The
homomorphism $\sigma$ is called the weight (or baric) function of
$A$ and $\sigma(x)$ the weight (baric value) of $x$.
\end{defn}

Recall that the element $x=\sum_{i=1}^nx_ie_i$ of an algebra $A$ is called an {\it absolute
nilpotent} if $x^2=0$. For $n$-dimensional evolution algebra $x^2=0$ is given by the
following system
\begin{equation}\label{n1}
\sum_{i=1}^na_{ij}x_i^2=0, \ \ j=1,\dots,n.
\end{equation}

An element $x$ of an algebra $\A$ is called {\it idempotent} if
$x^2=x$.  We denote by ${\mathcal Id}(\A)$
the set of idempotent elements of an algebra $\A$. For an evolution algebra the equation $x^2=x$ can be written as
\begin{equation}\label{v1}
x_j= \sum_{i=1}^na_{ij}x_i^2, \ \ j=1,\dots,n.
\end{equation}

Now we are ready to formulate the following

\begin{thm}\label{ttb} If a CEA, $E^{[s,t]}$ is defined by a matrix (\ref{M}) satisfying (\ref{det}) then
\begin{itemize}
\item[1.] The algebra $E^{[s,t]}$ is baric for any $(s,t)\in \mathcal T$.
\item[2.] The algebra $E^{[s,t]}$ has a unique absolute
nilpotent element $x=0$ for any $(s,t)\in \mathcal T$.
\item[3.]
$${\mathcal Id}\left(E^{[s,t]}\right)=\{\lambda_1,\, \lambda_2\}\bigcup \left\{\begin{array}{ccc}
\{\lambda_3\}, & \mbox{if} & (s,t)\in \left\{(s,t)\in\mathcal T: D_1(s,t)=0\right\}\\[2mm]
\{\lambda_4,\lambda_5\},& \mbox{if} & (s,t)\in \left\{(s,t)\in\mathcal T: D_1(s,t)>0\right\}
\end{array}\right.
\bigcup $$ $$
\left\{\begin{array}{cccc}
\{\lambda_6\}, & \mbox{if}& (s,t)\in \left\{(s,t)\in\mathcal T: D_2(s,t)=D_3(s,t)=0\right\}\\[2mm]
\{\lambda_7,\lambda_8\},& \mbox{if} & (s,t)\in \left\{(s,t)\in\mathcal T: D_2(s,t)=0, \, D_3(s,t)>0\right\}\\[2mm]
\{\lambda_9\}, & \mbox{if} & (s,t)\in \left\{(s,t)\in\mathcal T: D_2(s,t)>0, \, D_4(s,t)=0\right\}\\[2mm]
\{\lambda_{10},\lambda_{11}\}, & \mbox{if} & (s,t)\in\left\{(s,t)\in\mathcal T: D_2(s,t)>0, \, D_4(s,t)>0\right\}\\[2mm]
\{\lambda_{12}\}, & \mbox{if}& (s,t)\in \left\{(s,t)\in\mathcal T: D_2(s,t)>0,\, D_5(s,t)=0\right\}\\[2mm]
\{\lambda_{13},\lambda_{14}\}, & \mbox{if} & (s,t)\in\left\{(s,t)\in\mathcal T: D_2(s,t)>0, \, D_5(s,t)>0\right\},
\end{array}\right.
$$
where $\lambda_1=(0, 0, 0)$, $\lambda_2=\left(0, 0, 1/a_6^{[s,t]}\right)$,
$\lambda_3=\left(0, 1/a_4^{[s,t]}, 1/(2a_6^{[s,t]})\right)$,
$$D_1(s,t)=1-{4a_5^{[s,t]}a_6^{[s,t]}\over (a_4^{[s,t]})^2},\ \ \lambda_{4,5}=\left(0, {1\over a_4^{[s,t]}}, {1\pm\sqrt{D_1(s,t)}\over 2a_6^{[s,t]}}\right),$$
$$D_2(s,t)=1-{4a_2^{[s,t]}a_4^{[s,t]}\over (a_1^{[s,t]})^2},\ \
D_3(s,t)=1-4a_6^{[s,t]}\left({a_3^{[s,t]}\over (a_1^{[s,t]})^2}+{a_5^{[s,t]}\over (2a_4^{[s,t]})^2}\right),$$
$$\lambda_6=\left({1\over a_1^{[s,t]}}, {1\over 2a_4^{[s,t]}}, {1\over 2a_6^{[s,t]}}\right),\ \
\lambda_{7,8}=\left({1\over a_1^{[s,t]}}, {1\over 2a_4^{[s,t]}}, {1\pm\sqrt{D_3(s,t)}\over 2a_6^{[s,t]}}\right),
$$
$$D_{4,5}(s,t)=1-4a_6^{[s,t]}\left({a_3^{[s,t]}\over (a_1^{[s,t]})^2}+a_5^{[s,t]}y_{4,5}^2\right), \ \ \mbox{with} \ \
y_{4,5}={1\pm\sqrt{D_2(s,t)}\over 2 a^{[s,t]}_4},$$
$$\lambda_9=\left({1\over a_1^{[s,t]}}, y_4, {1\over 2a_6^{[s,t]}}\right),\ \
\lambda_{10,11}=\left({1\over a_1^{[s,t]}}, y_4, {1\pm\sqrt{D_4(s,t)}\over 2a_6^{[s,t]}}\right),
$$
$$\lambda_{12}=\left({1\over a_1^{[s,t]}}, y_5, {1\over 2a_6^{[s,t]}}\right),\ \
\lambda_{13,14}=\left({1\over a_1^{[s,t]}}, y_5, {1\pm\sqrt{D_5(s,t)}\over 2a_6^{[s,t]}}\right).
$$
\end{itemize}
\end{thm}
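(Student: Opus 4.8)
The plan is to verify each of the three assertions directly from the explicit form of the multiplication table encoded in $\M^{[s,t]}$, using the nondegeneracy hypothesis $a_1^{[s,t]}a_4^{[s,t]}a_6^{[s,t]}\neq 0$ throughout.

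\textbf{Part 1 (baric property).} Write a general character as $\sigma(e_i)=c_i$. The multiplicativity condition applied to the basis relations $e_ie_j=0$ for $i\neq j$ gives $c_ic_j=0$, so at most one $c_i$ is nonzero; applied to $e_ie_i=\sum_j a^{[s,t]}_{ij}e_j$ it gives $c_i^2=\sum_j a^{[s,t]}_{ij}c_j$. For the matrix (\ref{M}), I would try $\sigma(e_1)=\sigma(e_2)=0$, $\sigma(e_3)=c_3$; the only surviving equation is $c_3^2=a_6^{[s,t]}c_3$, so $c_3=a_6^{[s,t]}\neq 0$ works. Hence $\sigma(e_1,e_2,e_3)=(0,0,a_6^{[s,t]})$ is a character and $E^{[s,t]}$ is baric for every $(s,t)\in\mathcal T$.

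\textbf{Part 2 (absolute nilpotents).} Here I would specialize system (\ref{n1}) to the matrix (\ref{M}). Since the only nonzero entries of column $1$, column $2$, column $3$ are $a_1^{[s,t]}$; $a_2^{[s,t]},a_4^{[s,t]}$; $a_3^{[s,t]},a_5^{[s,t]},a_6^{[s,t]}$ respectively, the three equations read $a_1^{[s,t]}x_1^2=0$, $a_2^{[s,t]}x_1^2+a_4^{[s,t]}x_2^2=0$, $a_3^{[s,t]}x_1^2+a_5^{[s,t]}x_2^2+a_6^{[s,t]}x_3^2=0$. Solving top-down and using $a_1^{[s,t]},a_4^{[s,t]},a_6^{[s,t]}\neq 0$ forces $x_1=0$, then $x_2=0$, then $x_3=0$. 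So $x=0$ is the unique absolute nilpotent.

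\textbf{Part 3 (idempotents).} This is the computational heart and the main obstacle, since it is where the cascade of discriminants $D_1,\dots,D_5$ and the fourteen points $\lambda_i$ arise. Specializing (\ref{v1}) to (\ref{M}) gives the triangular-in-$x_1$ system
\begin{align*}
x_1 &= a_1^{[s,t]}x_1^2,\\
x_2 &= a_2^{[s,t]}x_1^2 + a_4^{[s,t]}x_2^2,\\
x_3 &= a_3^{[s,t]}x_1^2 + a_5^{[s,t]}x_2^2 + a_6^{[s,t]}x_3^2.
\end{align*}
From the first equation either $x_1=0$ or $x_1=1/a_1^{[s,t]}$, giving the split into the first two $\lambda$'s (when $x_1=x_2=0$) plus the two families. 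When $x_1=0$ the second equation becomes $x_2=a_4^{[s,t]}x_2^2$, so $x_2\in\{0,1/a_4^{[s,t]}\}$; the case $x_2=0$ feeds back $x_3\in\{0,1/a_6^{[s,t]}\}$ (giving $\lambda_1,\lambda_2$), while $x_2=1/a_4^{[s,t]}$ turns the third equation into a quadratic in $x_3$ with discriminant precisely $D_1(s,t)$, yielding $\lambda_3$ or $\lambda_{4,5}$ according to the sign of $D_1$. When $x_1=1/a_1^{[s,t]}$, the second equation is a quadratic in $x_2$ with discriminant $D_2(s,t)$: if $D_2=0$ there is the single root $x_2=1/(2a_4^{[s,t]})$ (leading, via the $x_3$-quadratic with discriminant $D_3$, to $\lambda_6$ or $\lambda_{7,8}$), and if $D_2>0$ there are the two roots $y_{4,5}$, each of which produces an $x_3$-quadratic with discriminant $D_4$ respectively $D_5$, giving $\lambda_9,\lambda_{10,11}$ respectively $\lambda_{12},\lambda_{13,14}$. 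Collecting all branches gives exactly the stated description of ${\mathcal Id}(E^{[s,t]})$. The only subtlety I expect to watch for is bookkeeping: checking that the discriminant expressions $D_3$ and $D_{4,5}$ written in the statement match the quadratics obtained after substituting the respective values of $x_2$, and confirming that the case $D_2<0$ (no real $x_2$ beyond the base points) correctly contributes nothing beyond $\lambda_1,\lambda_2$ and the $x_1=0$ family.
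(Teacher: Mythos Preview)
Your proposal is correct and follows essentially the same route as the paper for Parts~2 and~3: the paper writes down the triangular system \eqref{xyz} and then simply says that $x_1$ has two values, each of which gives up to two values of $x_2$, each of which gives up to two values of $x_3$, with the explicit list obtained by ``carefully computing the values''---exactly the branching analysis you sketch. For Part~1 the paper takes a slightly different path: rather than exhibiting the character $\sigma=(0,0,a_6^{[s,t]})$ directly, it invokes \cite[Theorem~3.2]{CLR}, which under the nondegeneracy condition \eqref{det} guarantees the baric property. Your explicit construction is a perfectly good (and more self-contained) substitute.
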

\begin{proof} 1. Under condition (\ref{det}) this assertion follows from \cite[Theorem 3.2]{CLR}.

2. From condition (\ref{det}) and system (\ref{n1}) we conclude that $x=(0,0,0)$ is the unique solution.

3. For matrix (\ref{M}) the system (\ref{v1}) can be written as
\begin{equation}\label{xyz}\begin{array}{lll}
x_1=a_1^{[s,t]}x_1^2\\[3mm]
x_2=a_2^{[s,t]}x_1^2+a_4^{[s,t]}x_2^2\\[3mm]
x_3=a_3^{[s,t]}x_1^2+a_5^{[s,t]}x_2^2+a_6^{[s,t]}x_3^2.
\end{array}
\end{equation}
Hence for $x_1$ there is two possibilities $x_1=0$ and $x_1=1/a_1^{[s,t]}$. For each value of $x_1$ the $x_2$ also has up to two possible
values. For fixed values of $x_1$ and $x_2$ we have two possible values for $x_3$.
Carefully computing the values gives the result.
\end{proof}

\begin{cor} The CEA considered in Theorem \ref{ttb} has the following dynamics:
\begin{itemize}
\item[1.] It has not "baric" property transition.
\item[2.] It has not "uniqueness absolute nilpotent element" property transition.
\item[3.] It has "a fixed number of idempotent elements" property
transition.  The transition critical sets are given by $\{(s,t): D_i(s,t)=0\}$, $i=1,2,3,4,5$, i.e. the number of idempotent elements changes depending on time, when the time point crosses these sets.
\end{itemize}
\end{cor}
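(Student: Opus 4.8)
The plan is to treat the three parts separately, deducing parts~1 and~2 almost immediately from Theorem~\ref{ttb} and reserving the real work for part~3.

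For part~1, recall that by Definition~\ref{d4} a CEA exhibits ``baric'' property transition only if there is at least one pair $(s,t)\in\mathcal T$ at which $E^{[s,t]}$ fails to be baric. But Theorem~\ref{ttb}(1) asserts that $E^{[s,t]}$ is baric for \emph{every} $(s,t)\in\mathcal T$. Hence $\mathcal T_P=\mathcal T$ and $\mathcal T_P^0=\emptyset$ for $P=$``baric'', so no transition pair exists. Part~2 is identical in form: Theorem~\ref{ttb}(2) gives that $x=0$ is the unique absolute nilpotent element for all $(s,t)\in\mathcal T$, so the property holds on all of $\mathcal T$ and, again by Definition~\ref{d4}, there is no transition.

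For part~3 I would read off the cardinality $N(s,t):=|\mathcal{Id}(E^{[s,t]})|$ directly from the classification in Theorem~\ref{ttb}(3). The points $\lambda_1,\lambda_2$ are present for all $(s,t)$ (they are distinct because $a_6^{[s,t]}\ne0$ by (\ref{det})), contributing a constant summand $2$. The remaining idempotents organize into two families governed respectively by $D_1$ and by $(D_2,D_3,D_4,D_5)$, each associated quadratic contributing $0$, $1$ or $2$ real solutions according to whether the relevant discriminant is negative, zero or positive: one obtains a double root ($\lambda_3,\lambda_6,\lambda_9,\lambda_{12}$) on the zero set, which splits into two distinct real roots ($\lambda_{4,5},\lambda_{7,8},\lambda_{10,11},\lambda_{13,14}$) when the discriminant becomes positive and disappears when it becomes negative. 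Consequently $N(s,t)$ is locally constant on the complement of $\bigcup_{i=1}^5\{(s,t):D_i(s,t)=0\}$ and jumps by $\pm1$ as $(s,t)$ crosses any one of these sets; this identifies $\{D_i=0\}$, $i=1,\dots,5$, as the transition critical sets.

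To conclude that a transition genuinely occurs (and is not vacuous), I would exhibit one CEA from Section~4 --- most conveniently the family of Proposition~\ref{pr1}, which satisfies (\ref{det}) --- together with two concrete time pairs at which $N$ differs. Choosing the free functions $\Phi_i,\xi,f$ so that, say, $D_1(s,t)=1-4a_5^{[s,t]}a_6^{[s,t]}/(a_4^{[s,t]})^2$ takes a positive value at one pair and a nonpositive value at another forces $N$ to change, establishing the transition. The main obstacle is precisely this last step: one must check that the discriminants $D_i$, which are explicit but somewhat involved rational expressions in the arbitrary functions $\Phi_i,\xi,f,\gamma$, are genuinely non-constant in $(s,t)$ for an admissible choice, so that at least one sign change is realized; everything else is bookkeeping on the list in Theorem~\ref{ttb}(3).
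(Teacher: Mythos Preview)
Your argument is correct and matches the paper's approach: the paper gives no explicit proof of this corollary, treating it as an immediate consequence of Theorem~\ref{ttb}, and your deductions from parts~1--3 of that theorem via Definition~\ref{d4} are exactly the natural way to fill in the details. Your final paragraph---checking that the transition in part~3 is genuinely non-vacuous by choosing the free functions so that some $D_i$ changes sign---goes beyond what the paper does; the paper simply asserts the transition without verifying that the discriminants are non-constant for an admissible CEA, so your caution there is warranted even if the verification is routine.
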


\section{CEAs corresponding to symmetric matrices}
In this section we consider $\M^{[s,t]}=\left(a_{ij}^{[s,t]}\right)_{i,j=1,\dots,n}$ with $a_{ij}^{[s,t]}=a_{ji}^{[s,t]}$ and solve the equation (\ref{k2}) for such matrices.
The equation (\ref{k2}) has the following form
\begin{equation}\label{sm1}
\sum_{k=1}^na_{ik}^{[s,\tau]}a_{kj}^{[\tau,t]}=a_{ij}^{[s,t]}, \ \ i,j=1,\dots,n.
\end{equation}
We introduce the following functions
\begin{equation}\label{sm2}
f_{i}(s,t)=\sum_{j=1}^na_{ij}^{[s,t]}, \ \ i=1,\dots,n.
\end{equation}
\begin{equation}\label{sm3}
g_{ik}(s,t)=\sum_{j=1}^na_{ij}^{[s,t]}-a_{ik}^{[s,t]}, \ \ i,k=1,\dots,n.
\end{equation}

Using (\ref{sm1}) from (\ref{sm2}) we get
$$f_{i}(s,t)=\sum_{j=1}^n\sum_{k=1}^na_{ik}^{[s,\tau]}a_{kj}^{[\tau,t]}=
\sum_{k=1}^n\left(a_{ik}^{[s,\tau]}\sum_{j=1}^na_{kj}^{[\tau,t]}\right)=
\sum_{k=1}^na_{ik}^{[s,\tau]}f_k(\tau,t), \ \ \, i=1,\dots,n.$$
Consequently, using symmetry of the matrix we get
$$\sum_{i=1}^nf_{i}(s,t)=\sum_{k=1}^n\left(\sum_{i=1}^na_{ik}^{[s,\tau]}\right)f_k(\tau,t)=\sum_{k=1}^nf_k(s,\tau)f_k(\tau,t) .$$
Hence
\begin{equation}\label{sm4}
\sum_{i=1}^n\left(f_{i}(s,t)-f_i(s,\tau)f_i(\tau,t)\right)=0.
\end{equation}
Now we shall derive an equation for functions $g_{ik}$.
 Using (\ref{sm1}) from (\ref{sm3}) we get
$$g_{ik}(s,t)=\sum_{j=1}^n\sum_{p=1}^na_{ip}^{[s,\tau]}a_{pj}^{[\tau,t]}-\sum_{p=1}^na_{ip}^{[s,\tau]}a_{pk}^{[\tau,t]}=
\sum_{p=1}^n\left(a_{ip}^{[s,\tau]}\left(\sum_{j=1}^na_{pj}^{[\tau,t]}-a_{pk}^{[\tau,t]}\right)\right).$$
Hence
$$g_{ik}(s,t)=\sum_{p=1}^na_{ip}^{[s,\tau]}g_{pk}(\tau,t), \ \ i,k=1,\dots,n.$$
Using the symmetry of the matrix from the last equality we get
$$\sum_{i=1}^ng_{ik}(s,t)=\sum_{p=1}^n\left(\sum_{i=1}^na_{ip}^{[s,\tau]}\right)g_{pk}(\tau,t)=
\sum_{p=1}^nf_p(s,\tau)g_{pk}(\tau,t), \ \ k=1,\dots,n.$$
Thus we obtain
\begin{equation}\label{sm5}
\sum_{i=1}^n\left(g_{ik}(s,t)-f_i(s,\tau)g_{ik}(\tau,t)\right)=0, \ \ k=1,\dots,n.
\end{equation}
We note that (\ref{sm4}) and (\ref{sm5}) are general equations for functions $f_i$ and $g_{ik}$. These equations are satisfied, in particular, if
\begin{equation}\label{sm6}
f_{i}(s,t)=f_i(s,\tau)f_i(\tau,t),  \ \ \mbox{for any} \ \ i=1,\dots,n;
\end{equation}
\begin{equation}\label{sm7}
g_{ik}(s,t)=f_i(s,\tau)g_{ik}(\tau,t), \ \ \mbox{for any} \ \ i,k=1,\dots,n.
\end{equation}
Equations (\ref{sm6}) have the following solutions
$$f_i(s,t)={\Phi_i(t)\over\Phi_i(s)}, \ \ i=1,\dots,n,$$
where $\Phi_i$ are arbitrary functions with $\Phi_i(s)\ne 0$.
Substituting this solution in (\ref{sm7})
we obtain
$$g_{ik}(s,t)={\gamma_{ik}(t)\over \Phi_i(s)}, \ \ i,k=1,\dots,n,$$
where $\gamma_{ik}$ are arbitrary functions.
By (\ref{sm2}) and (\ref{sm3}) we have
\begin{equation}\label{sm8}
a_{ik}^{[s,t]}=a_{ki}^{[s,t]}=f_i(s,t)-g_{ik}(s,t)={\Phi_i(t)-\gamma_{ik}(t)\over \Phi_i(s)}, \ \ i,k=1,\dots,n.
\end{equation}
Thus we have proved the following
\begin{thm} A matrix $\M^{[s,t]}=\left(a_{ik}^{[s,t]}\right)_{i,k=1,\dots,n}$
given by (\ref{sm8}) generates an $n$-dimensional CEA.
\end{thm}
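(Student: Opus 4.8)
The plan is to verify directly that the matrix $\M^{[s,t]}$ with entries $a_{ik}^{[s,t]} = (\Phi_i(t) - \gamma_{ik}(t))/\Phi_i(s)$ satisfies the Chapman-Kolmogorov equation (\ref{k2}), i.e.\ that $\sum_{k=1}^n a_{ik}^{[s,\tau]} a_{kj}^{[\tau,t]} = a_{ij}^{[s,t]}$ for all $i,j$ and all $s<\tau<t$. Since the ansatz was \emph{derived} by imposing (\ref{sm6}) and (\ref{sm7}) — which are only \emph{sufficient} conditions guaranteeing the general consistency relations (\ref{sm4}) and (\ref{sm5}) — the honest thing is to plug the closed form back into (\ref{sm1}) and check it as an identity, rather than to claim it follows formally from the derivation. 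So first I would substitute (\ref{sm8}) into the left-hand side of (\ref{sm1}) and expand.

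Carrying this out, write $a_{ik}^{[s,\tau]} = (\Phi_i(\tau) - \gamma_{ik}(\tau))/\Phi_i(s)$ and $a_{kj}^{[\tau,t]} = (\Phi_k(t) - \gamma_{kj}(t))/\Phi_k(\tau)$. The product telescopes nicely only if the $\gamma$'s interact correctly; the key algebraic step is to recognize the two auxiliary functions $f_i(s,t) = \sum_j a_{ij}^{[s,t]}$ and $g_{ik}(s,t) = f_i(s,t) - a_{ik}^{[s,t]}$ and to re-derive, from the closed form (\ref{sm8}), that indeed $f_i(s,t) = \Phi_i(t)/\Phi_i(s)$ and $g_{ik}(s,t) = \gamma_{ik}(t)/\Phi_i(s)$ (this just reads off the definitions). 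Then the computation done in the body of the section — namely $\sum_{j=1}^n a_{ik}^{[s,\tau]} a_{kj}^{[\tau,t]} = a_{ik}^{[s,\tau]} f_k(\tau,t)$ when summed over $j$, and the analogous manipulation isolating a single column — shows that the multiplicativity relation (\ref{sm6}) for the $f_i$ and the relation (\ref{sm7}) for the $g_{ik}$ together force $\sum_k a_{ik}^{[s,\tau]} a_{kj}^{[\tau,t]} = f_i(s,t) - g_{ij}(s,t) = a_{ij}^{[s,t]}$. The point is that (\ref{sm1}) for a symmetric matrix is equivalent to the pair of scalar relations (\ref{sm6})–(\ref{sm7}) on $(f_i, g_{ik})$ \emph{provided one also checks the off-diagonal column identity directly}, and both of those scalar relations hold by construction of (\ref{sm8}).

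The main obstacle — and the place where one must be careful — is that passing from the two \emph{aggregate} identities (\ref{sm4}) and (\ref{sm5}) (sums over $i$) to the \emph{entrywise} identity (\ref{sm1}) is not automatic: many matrices satisfying (\ref{sm4})–(\ref{sm5}) need not satisfy (\ref{k2}). What rescues the argument is that the stronger relations (\ref{sm6}) and (\ref{sm7}), which the closed form does satisfy, are enough to run the computation entry by entry, because in the derivation of $f_i$ and $g_{ik}$ one only ever used a single summation over $j$, not a double one. So the proof reduces to: (a) observe that (\ref{sm8}) gives $f_i(s,t)=\Phi_i(t)/\Phi_i(s)$ and $g_{ik}(s,t)=\gamma_{ik}(t)/\Phi_i(s)$, hence (\ref{sm6}) and (\ref{sm7}) hold; (b) substitute into $\sum_k a_{ik}^{[s,\tau]} a_{kj}^{[\tau,t]}$, split off the $k=j$ term or equivalently write $a_{kj}^{[\tau,t]} = f_k(\tau,t) - g_{kj}(\tau,t)$, and use symmetry $a_{ik}=a_{ki}$ together with $\sum_i a_{ik}^{[s,\tau]} = f_k(s,\tau)$; (c) collect terms to recover exactly $f_i(s,t) - g_{ij}(s,t) = a_{ij}^{[s,t]}$. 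Finally, one remarks that the $\Phi_i$ are assumed nowhere-zero so all entries are well defined, and that no positivity or stochasticity is needed since (\ref{1}) imposes no sign constraints; therefore $\M^{[s,t]}$ is the matrix of structural constants of a genuine $n$-dimensional chain of evolution algebras in the sense of Definition~\ref{d1}.
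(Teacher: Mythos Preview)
Your diagnosis of the logical structure is exactly right: the paper only shows that (\ref{sm1}) implies the aggregate relations (\ref{sm4})--(\ref{sm5}), then solves the stronger termwise relations (\ref{sm6})--(\ref{sm7}), and never checks that the resulting formula feeds back into (\ref{sm1}). So a direct verification is indeed what is called for. The trouble is that your step~(a) does not hold. If you compute the row sum $f_i$ from the closed form (\ref{sm8}) you get
\[
f_i(s,t)=\sum_{k=1}^n a_{ik}^{[s,t]}=\frac{n\Phi_i(t)-\sum_{k=1}^n\gamma_{ik}(t)}{\Phi_i(s)},
\]
which equals $\Phi_i(t)/\Phi_i(s)$ only under the extra constraint $\sum_k\gamma_{ik}(t)=(n-1)\Phi_i(t)$. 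The paper's phrase ``$\gamma_{ik}$ are arbitrary functions'' hides this; without that constraint your identification $f_i=\Phi_i(t)/\Phi_i(s)$, $g_{ik}=\gamma_{ik}(t)/\Phi_i(s)$ is simply false, and the rest of the computation in (b)--(c) collapses.

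In fact the statement as written cannot be rescued by a cleverer argument: for $n=2$ take $\Phi_1=\Phi_2\equiv 1$, $\gamma_{11}=\gamma_{22}\equiv 0$, $\gamma_{12}=\gamma_{21}\equiv 1/2$. Then (\ref{sm8}) gives the constant symmetric matrix $\M^{[s,t]}=\bigl(\begin{smallmatrix}1&1/2\\1/2&1\end{smallmatrix}\bigr)$, which is not idempotent, so (\ref{k2}) fails. Even after imposing both the symmetry in (\ref{sm8}) and the row-sum constraint above, one still does not get (\ref{k2}) in general (try $\Phi\equiv 1$ and a constant symmetric $h$ with row sums $1$; the resulting matrix is idempotent only for two special values). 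So the gap you spotted in the paper's derivation is genuine and is not closed by your proposed verification; what the section actually establishes is only that solutions of (\ref{sm6})--(\ref{sm7}) satisfy the summed relations (\ref{sm4})--(\ref{sm5}), which is strictly weaker than (\ref{k2}).
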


\section{CEAs corresponding to block diagonal matrices}
Recall that a block diagonal matrix is a block matrix which is a square matrix, and having main diagonal blocks square matrices, such that the off-diagonal blocks are zero matrices. A block diagonal matrix $M$ has the form
$$M=\left(\begin{array}{cccc}
M_1&0&\cdots&0\\[3mm]
0&M_2&\cdots&0\\[3mm]
\vdots&\vdots&\ddots&\vdots\\[3mm]
0&0&\cdots&M_m\\[3mm]
\end{array}
\right),$$
where $M_k$ is a square matrix. It can also be indicated as $M_1\oplus M_2\oplus\dots\oplus M_m$.

\begin{lemma}\label{l1}
Let $\M^{[s,t]}=\M_1^{[s,t]}\oplus \M_2^{[s,t]}\oplus\dots\oplus \M_m^{[s,t]}$ be a block diagonal matrix. This satisfies
(\ref{k2}) if and only if each $\M_i^{[s,t]}$, $i=1,2,\dots,m$ satisfies the equation (\ref{k2}).
\end{lemma}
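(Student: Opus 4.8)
The plan is to exploit the fact that for a block diagonal matrix $M=M_1\oplus\dots\oplus M_m$, the off-diagonal blocks are forced to stay zero under multiplication, so the Chapman--Kolmogorov equation (\ref{k2}) decouples into $m$ independent equations. First I would recall the elementary linear-algebra fact that if $M=M_1\oplus\dots\oplus M_m$ and $N=N_1\oplus\dots\oplus N_m$ are block diagonal with compatible block sizes, then the product $MN$ is again block diagonal with the same block structure and $(MN)_i=M_iN_i$ for each $i=1,\dots,m$. This is immediate from the partitioned-matrix multiplication formula, since every cross term involves a factor from an off-diagonal (hence zero) block.

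For the proof of Lemma \ref{l1}, fix $s<\tau<t$ and write $\M^{[s,t]}=\M_1^{[s,t]}\oplus\dots\oplus\M_m^{[s,t]}$, and similarly for the pairs $(s,\tau)$ and $(\tau,t)$; note that all these matrices share the same block-size decomposition, since this decomposition is built into the hypothesis on the chain. For the forward direction, assume (\ref{k2}) holds for $\M^{[s,t]}$. By the fact above, $\M^{[s,\tau]}\M^{[\tau,t]}=\bigl(\M_1^{[s,\tau]}\M_1^{[\tau,t]}\bigr)\oplus\dots\oplus\bigl(\M_m^{[s,\tau]}\M_m^{[\tau,t]}\bigr)$, and equating this block diagonal matrix with $\M^{[s,t]}=\M_1^{[s,t]}\oplus\dots\oplus\M_m^{[s,t]}$ block by block gives $\M_i^{[s,\tau]}\M_i^{[\tau,t]}=\M_i^{[s,t]}$ for every $i$; since $s<\tau<t$ was arbitrary, each $\M_i^{[s,t]}$ satisfies (\ref{k2}). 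For the converse, assume each $\M_i^{[s,t]}$ satisfies (\ref{k2}); then $\M^{[s,\tau]}\M^{[\tau,t]}=\bigoplus_i\M_i^{[s,\tau]}\M_i^{[\tau,t]}=\bigoplus_i\M_i^{[s,t]}=\M^{[s,t]}$, so (\ref{k2}) holds for $\M^{[s,t]}$.

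There is essentially no serious obstacle here; the only point requiring a word of care is making explicit that the block partition must be the same for all three time-pairs appearing in the equation (\ref{k2}), which is part of what it means to speak of a block diagonal \emph{chain}, and that the diagonal blocks $\M_i^{[s,t]}$ individually have the shape (\ref{1}) of evolution-algebra structure matrices (diagonal off-basis products vanish), so that the phrase ``satisfies the equation (\ref{k2})'' is meaningful for each piece. Once that bookkeeping is in place, the statement reduces to the purely formal computation that direct sums commute with matrix multiplication, and the equivalence follows by reading off the block components.
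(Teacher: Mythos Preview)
Your proof is correct and follows essentially the same approach as the paper: both rest on the elementary fact that the product of block diagonal matrices (with compatible block structure) is block diagonal with blocks given by the products of the corresponding blocks. The paper simply states this fact in one line, while you have spelled out the two directions explicitly.
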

\begin{proof} It follows from the fact that the product of diagonal matrices
amounts to simply multiplying corresponding diagonal elements together.
\end{proof}

In \cite{CLR}, \cite{RM} it were constructed several CEAs. Using these CEAs and the CEAs constructed in
the previous sections, by Lemma \ref{l1} one can construct new chains of arbitrary dimensional evolution algebras, i.e.
the following is true

\begin{thm}\label{tb} If $\M^{[s,t]}=\M_1^{[s,t]}\oplus \M_2^{[s,t]}\oplus\dots\oplus \M_m^{[s,t]}$ is a block diagonal matrix, where $\M_i^{[s,t]}$ correspond to a known CEA. Then $\M^{[s,t]}$ generates a CEA.
\end{thm}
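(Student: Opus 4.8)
The plan is to reduce Theorem \ref{tb} directly to Lemma \ref{l1}. The statement asserts that if each block $\M_i^{[s,t]}$ corresponds to a known CEA, then the block diagonal matrix $\M^{[s,t]}=\M_1^{[s,t]}\oplus\cdots\oplus\M_m^{[s,t]}$ also generates a CEA. By Definition \ref{d1}, ``generates a CEA'' means precisely that the family $\{\M^{[s,t]}\}$ satisfies the Chapman--Kolmogorov equation (\ref{k2}) and that for each fixed $(s,t)$ the matrix $\M^{[s,t]}$ is a legitimate matrix of structural constants of an $n$-dimensional evolution algebra via (\ref{1}); the latter is automatic since \emph{any} real $n\times n$ matrix defines such an algebra. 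So the entire content is the Chapman--Kolmogorov equation.

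First I would observe that, by hypothesis, for each $i=1,\dots,m$ the family $\{\M_i^{[s,t]}\}$ comes from a known CEA, hence satisfies (\ref{k2}): $\M_i^{[s,t]}=\M_i^{[s,\tau]}\M_i^{[\tau,t]}$ for all $s<\tau<t$. Then I would invoke Lemma \ref{l1} in the ``if'' direction: since each $\M_i^{[s,t]}$ satisfies (\ref{k2}), the block diagonal matrix $\M^{[s,t]}=\M_1^{[s,t]}\oplus\cdots\oplus\M_m^{[s,t]}$ satisfies (\ref{k2}) as well. Concretely this is the elementary fact, already noted in the proof of Lemma \ref{l1}, that the product of two block diagonal matrices with conforming block structure is the block diagonal matrix of the blockwise products, so $\M^{[s,\tau]}\M^{[\tau,t]}=(\M_1^{[s,\tau]}\M_1^{[\tau,t]})\oplus\cdots\oplus(\M_m^{[s,\tau]}\M_m^{[\tau,t]})=\M_1^{[s,t]}\oplus\cdots\oplus\M_m^{[s,t]}=\M^{[s,t]}$.

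There is essentially no obstacle here: the theorem is an immediate corollary of Lemma \ref{l1}, and the only thing worth spelling out is the (trivial) point that being a CEA imposes no further constraint on the matrix beyond (\ref{k2}), together with the remark that the construction is non-vacuous because many explicit examples of lower-dimensional CEAs are available --- the two- and three-dimensional families built in \cite{CLR}, \cite{RM} and in Section 4 of the present paper, as well as the symmetric families of Section 6. Thus, for any choice of blocks each taken from such a list, the direct sum produces a genuine CEA of the correspondingly larger dimension, which is exactly the assertion. I would conclude the proof with one line citing Lemma \ref{l1}.

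\begin{proof}
By Definition \ref{d1}, to show that $\M^{[s,t]}=\M_1^{[s,t]}\oplus\M_2^{[s,t]}\oplus\dots\oplus\M_m^{[s,t]}$ generates a CEA it suffices to check that the family $\{\M^{[s,t]}:\ 0\le s\le t\}$ satisfies the Chapman--Kolmogorov equation (\ref{k2}); indeed, for each fixed pair $(s,t)$ the matrix $\M^{[s,t]}$ automatically serves as the matrix of structural constants of an $n$-dimensional evolution algebra through the rule (\ref{1}). By assumption, each $\M_i^{[s,t]}$ corresponds to a known CEA, hence the family $\{\M_i^{[s,t]}\}$ satisfies (\ref{k2}) for every $i=1,2,\dots,m$. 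Applying Lemma \ref{l1}, we conclude that the block diagonal matrix $\M^{[s,t]}$ satisfies (\ref{k2}) as well. Therefore $\M^{[s,t]}$ generates a CEA.
\end{proof}
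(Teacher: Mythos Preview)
Your proposal is correct and matches the paper's approach exactly: the paper states Theorem \ref{tb} as an immediate consequence of Lemma \ref{l1} without giving a separate proof, and your argument simply spells out that implication.
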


We note that the set of CEAs is very rich: taking matrices $\M^{[s,t]}$ constructed in Theorem \ref{tb} as blocks of a new matrix $\tilde{\M}^{[s,t]}$ one can construct new CEAs. Repeating the argument one can construct very rich class of CEAs.

\section*{ Acknowledgements} The work is supported by the
Grant No.0251/GF3 of Education and Science Ministry of Republic of Kazakhstan.

{}
\end{document}